\newtheorem{theorem}{Theorem}[section]
\newtheorem{lemma}[theorem]{Lemma}
\newtheorem{corollary}[theorem]{Corollary}
\newtheorem{example}[theorem]{Example}
\newtheorem{remark}[theorem]{Remark}
\newcommand{\xx}{\mathcal X}
\newcommand{\cC}{\mathcal C}
\newcommand{\cL}{\mathcal L}
\newcommand{\F}{\mathbb F}
\newcommand{\N}{\mathbb N}
\newcommand{\Z}{\mathbb Z}
\newcommand{\K}{\mathbb K}
\newcommand{\ga}{\alpha}
\newcommand{\gb}{\beta}
\newcommand{\gs}{\sigma}
\newcommand{\gO}{\Omega}
\title{Weierstrass pure gaps on curves with three distinguished points}
\date{}
\author{Herivelto Borges \\ Gregory Cunha}
\begin{document}
\maketitle

\begin{abstract}
Let  $\K$ be an algebraically closed field. In this paper, we  consider the class of  smooth plane curves of degree $n+1>3$ over $\K$,    containing  three points,  $P_1,P_2,$ and $P_3$, such that $nP_1+P_2$, $nP_2+P_3$, and $nP_3+P_1$ are divisors cut out by three distinct lines. For such curves, we determine the dimension of certain special divisors  supported on $\{P_1,P_2,P_3\}$, as well as  an explicit description of  all  pure gaps at any subset of $\{P_1,P_2,P_3\}$. When $\K=\overline{\F}_q$, this class of  curves, which includes the Hermitian curve,  is  used to construct algebraic geometry  codes having minimum distance better than the Goppa bound.
\end{abstract}

\providecommand{\keywords}[1]{\textbf{\textit{Keywords:}} #1}

\keywords{algebraic curve, special divisors, Weierstrass gaps, pure gaps, Goppa codes}
\\

\textbf{\textit{MSC:}} 14H55; 14G50; 94B05

\section{Introduction}

\indent

Let $\xx$ be a projective, geometrically irreducible, nonsingular algebraic curve of genus $g$ defined over an algebraically closed field $\K$, and let $\K(\xx)$ be its field of rational functions. 
For distinct points $P_1, \ldots, P_r \in \xx$, the  Weierstrass semigroup of $\xx$ at $(P_1, \dots, P_r)$ is defined by
$$
H(P_1, \dots, P_r) := \left\{ (n_1, \dots, n_r) \in \N_0^r : \; \exists f \in \K(\xx) \text{ with } \mbox{div}_{\infty}(f) = \sum_{i=1}^{r} n_i P_i \right\},
$$
where $\N_0$ denotes the set of nonnegative integers, and $\mbox{div}_{\infty}(f)$ denotes the pole divisor of $f$.
The elements in the finite complement 
$G(P_1, \ldots, P_r) := \N_0^r \backslash H(P_1, \dots, P_r)$
are called  Weierstrass gaps of $\xx$ at $(P_1, \dots, P_r)$. An $r$-tuple $(n_1, \dots, n_r) \in \N_0^r$ is a  pure gap of $\xx$ at $(P_1, \dots, P_r)$ if the Riemann-Roch spaces
$\cL \left( \sum_{i=1}^{r} n_i P_i \right)$ and $\cL \left( \sum_{i=1}^{r} n_i P_i - P_j \right)$
have the same dimension for all $j=1,\dots,r$.
The set of pure gaps at $(P_1, \dots, P_r)$ is denoted by $G_0(P_1, \dots, P_r)$. It is easy to check that $G_0(P_1, \dots, P_r) \subset G(P_1, \dots, P_r)$.
\\

The notion of Weierstrass semigroups at several points was introduced by Arbarello, Cornalba, Griffiths, and Harris \cite[p. 365]{Arbarello}, and the concept of pure gaps of a pair of points on a curve was first presented and investigated by Homma and Kim \cite{HK}. The latter study was extended to several points by Carvalho and Torres \cite{CT}.
After these two seminal papers, many authors have pursuit the characterization of Weierstrass semigroups and pure gaps on special families of curves  \cite{bartoli2018}, \cite{cicero-kato}, \cite{matthews}, \cite{TT}. This problem, which involves  determining the dimension of certain divisors,  is challenging and important in its own right. It  can be related to many other problems,  such as  bounding the number of rational points on curves over finite fields \cite{Beelen}, \cite{Stohr-Voloch}.  Its study  can be further motivated by the construction of algebraic geometry codes, also known as Goppa codes, with good parameters. In fact, from \cite{HK} and \cite{CT}, one can see that an explicit description of pure gaps at several points can be used to construct Goppa codes with large minimum distance. 
\\

In this article, we   consider the following type of curves.

\begin{equation}\label{curvaX}
\fbox{\begin{minipage}{38em}
$\xx$ is a smooth plane curve of degree $n+1>3$, equipped with points  $P_1,P_2,$ and $P_3$,  and three distinct lines cutting out on $\xx$ the divisors $nP_1+P_2$, $nP_2+P_3$, and $nP_3+P_1$.
\end{minipage}}
\end{equation}

The Hermitian curve $\mathcal{H}\, : Y^qZ+YZ^q-X^{q+1}=0$ over $\overline{\F}_q$, $q>2$, is a well-known example of curve of type \eqref{curvaX}. In fact, for any of the  points $P \in \mathcal{H}(\F_{q^6})$
which are not defined over $\F_{q^2}$, the Frobenius images
$$P,P^{q^2},P^{q^4} \in  \mathcal{H}$$
are points with the desired property. A more detailed characterization  of  the   curves  \eqref{curvaX} is will be provided  in Section \ref{sec3}.
For such a family of curves,  we provide   an explicit description of the set of pure gaps at any subset of $\{P_1,P_2,P_3\}$. More precisely,  we prove the  following main results.

\begin{theorem}\label{p1p2}
Let $\xx$ be a curve given by \eqref{curvaX}, and let $g$ be its genus. The set $G_0(P_1,P_2)$ of pure gaps of $\xx$ at $(P_1,P_2)$ has cardinality $\frac{1}{3}(g-1)g$, and it is comprised by the elements
$$
(a,b) := \Big((i-1)n+r+1,(j-1)n+i+s \Big),
$$
with $i,j \geq 1$, $ i+j=d \leq n-1$, $r \in \{0,\ldots, n-d-1\}$ and $s \in \{0,\ldots, n-d\}$.
Moreover, the Riemann-Roch spaces $\cL(aP_1+bP_2)$ and $\cL((a-1)P_1+(b-1)P_2)$ have dimension $\frac{1}{2}(d-1)(d-2) + i$, and the same holds for the ordered pairs $(P_2,P_3)$ and $(P_3,P_1)$.
\end{theorem}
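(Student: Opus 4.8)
The plan is to work with explicit rational functions on $\xx$ coming from the three distinguished lines. Let $\ell_1,\ell_2,\ell_3$ be the linear forms with $\mathrm{div}(\ell_i/\ell_j)$ expressible via $nP_1+P_2$, $nP_2+P_3$, $nP_3+P_1$, so that on $\xx$ one has principal divisors such as $\mathrm{div}(\ell_1/\ell_2)=nP_1+P_2-nP_2-P_3$ and so on. By taking suitable monomials in $\ell_1/\ell_3$, $\ell_2/\ell_3$ (equivalently, functions of the form $(\ell_1/\ell_3)^{u}(\ell_2/\ell_3)^{v}$), one produces a stock of functions whose divisors are supported on $\{P_1,P_2,P_3\}$; these generate large pieces of the Weierstrass semigroup $H(P_1,P_2,P_3)$. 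First I would pin down, for the stated $(a,b)$, a basis of $\cL(aP_1+bP_2)$ consisting of such explicit functions together with functions coming from plane curves (forms of degree $\le n-1$) passing through appropriate combinations of $P_1,P_2,P_3$ — this is where the numbers $(i-1)n+r+1$ and $(j-1)n+i+s$, and the intertwining of the $P_1$- and $P_2$-coordinates, will come from the intersection multiplicities dictated by the three lines.

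**Dimension count and the pure-gap property.**

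The core is to show $\dim\cL(aP_1+bP_2)=\tfrac12(d-1)(d-2)+i$ and, simultaneously, $\dim\cL((a-1)P_1+(b-1)P_2)$ equals the same number; pure-gap-ness at both $P_1$ and $P_2$ then follows because $\dim\cL((a-1)P_1+bP_2)$ and $\dim\cL(aP_1+(b-1)P_2)$ are squeezed between these two equal values. To get the dimension I would combine Riemann–Roch with an upper bound: identify $\cL(aP_1+bP_2)$ (or rather $\cL(aP_1+bP_2-P_1-P_2)$, after clearing a common function with a pole only along the $P_i$'s) with a space of plane curves of some degree $m\le n-1$ subject to prescribed vanishing orders at $P_1,P_2,P_3$ along the three line directions. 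The dimension $\binom{m+2}{2}$ of degree-$m$ forms, minus the imposed conditions, should collapse to $\tfrac12(d-1)(d-2)+i$ once one checks the conditions are independent. The independence of these linear conditions — i.e. that the prescribed-vanishing forms cut out a subspace of the expected codimension — is the main obstacle: it amounts to a statement that certain osculating conditions at three points in special position (forced by hypothesis \eqref{curvaX}) are in general position, and I expect this to be handled by an explicit triangular/Vandermonde-type argument in suitable local coordinates, or by exhibiting enough forms to meet a lower bound that matches the Riemann–Roch upper bound.

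**Counting $G_0(P_1,P_2)$ and transferring to the other pairs.**

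Once the element list and the dimension formula are established, the cardinality $\tfrac13 g(g-1)$ is a finite combinatorial sum: the genus of a smooth plane curve of degree $n+1$ is $g=\binom{n}{2}$, and one checks $\sum_{d=2}^{n-1}\sum_{i+j=d}(n-d)(n-d+1)$ (the number of choices of $i,j,r,s$) equals $\tfrac13\binom{n}{2}\big(\binom{n}{2}-1\big)$; I would verify this by a routine induction on $n$ or by a generating-function identity, after first checking the listed tuples are pairwise distinct (injectivity of $(i,j,r,s)\mapsto(a,b)$, which follows from reading off $i$ and $r$ from $a$ modulo $n$ and then $j,s$ from $b$). Finally, the symmetry among $(P_1,P_2)$, $(P_2,P_3)$, $(P_3,P_1)$ is immediate from the cyclic symmetry of the defining data in \eqref{curvaX}: the three lines play cyclically symmetric roles, so there is an automorphism-free but fully symmetric relabeling argument giving the last sentence of the theorem with no extra work.
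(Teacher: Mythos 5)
Your outline establishes (modulo the independence check you yourself flag as the main obstacle) that every listed tuple $(a,b)$ is a pure gap, and that the listed tuples are $\tfrac13 g(g-1)$ in number. But that only gives the inclusion $S\subseteq G_0(P_1,P_2)$ and hence $\#G_0(P_1,P_2)\geq \tfrac13 g(g-1)$; nowhere do you argue that there are no \emph{other} pure gaps. The theorem asserts that $G_0(P_1,P_2)$ \emph{is comprised by} these elements, so you need an independent upper bound $\#G_0(P_1,P_2)\leq \tfrac13 g(g-1)$ (or a direct argument that any pure gap has the stated form). This is the structural piece your proposal is missing. The paper supplies it by first computing the Kim-map $\beta:G(P_1)\to G(P_2)$ explicitly (exhibiting, for each gap $n_i$ at $P_1$, a function with pole divisor $n_iP_1+m_{\sigma(i)}P_2$ and invoking Homma's lemma), and then using the Homma--Kim formula $\#G_0(P_1,P_2)=\#R(\sigma)$ to evaluate the cardinality as a sum over inversions of the permutation $\sigma$. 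Only after that independent count does the ``exhibit enough pure gaps and match the count'' argument close the proof. Without some substitute for this step your proof is incomplete.

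On the constructive half, your route (identifying $\cL(aP_1+bP_2)$ with spaces of plane forms with prescribed vanishing and checking the conditions are independent) is workable but heavier than needed; the paper instead computes only the two extremal divisors $N_d=(i-1)nP_1+((j-1)n+i-1)P_2$ and $M_d=(in-d)P_1+(jn-j)P_2$ of each rectangle of listed tuples, shows $\ell(N_d)=\ell(M_d)=\tfrac12(d-1)(d-2)+i$ via explicit monomial bases in $x,y$ and Riemann--Roch against the canonical divisor cut by $Z^{n-2}$, and then gets every intermediate $\ell(aP_1+bP_2)=\ell((a-1)P_1+(b-1)P_2)$ for free by monotonicity of $\ell$ along the chain $N_d\leq\cdots\leq M_d$. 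You may want to adopt that reduction: it replaces your per-divisor independence check by two computations per $(i,j)$. Your remarks on injectivity of $(i,j,r,s)\mapsto(a,b)$ and on transferring the result to $(P_2,P_3)$ and $(P_3,P_1)$ by the cyclic symmetry of the hypotheses are fine.
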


\begin{theorem}\label{cor}
Let $\xx$ be a curve given by \eqref{curvaX}. The set of pure gaps $G_0(P_1,P_2,P_3)$ of $\xx$ at $(P_1,P_2,P_3)$ is comprised by the elements
$$
(a,b,c):=\Big(kn+j+r+1,in+k+s+1, jn+i+t+1\Big),
$$
with $i,j,k \geq 0$, $i+j+k=d \leq n-3$ and $r,s,t \in \{0,\ldots, n-3-d\}$.
Moreover, the Riemann-Roch spaces $\cL(aP_1+bP_2+cP_3)$ and $\cL((a-1)P_1+(b-1)P_2+(c-1)P_3)$ have dimension $\frac{1}{2}(d+1)(d+2)$. In particular, the number of pure gaps at $(P_1,P_2,P_3)$ is
$$
\# G_0(P_1,P_2,P_3) = \frac{1}{30}(g-1)g(2g-1),
$$
where $g$ is the genus of $\xx$.
\end{theorem}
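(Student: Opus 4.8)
The plan is to convert the statement into a problem about plane curves of degree $n-2$, using two standard facts: a smooth plane curve of degree $n+1$ has genus $g=\binom{n}{2}$ and its regular differentials are exactly the $\omega_F$ with $F\in\K[X,Y,Z]_{n-2}$, the divisor of $\omega_F$ being the intersection cycle of $F$ with $\xx$; and $(a,b,c)$ is a pure gap at $(P_1,P_2,P_3)$ if and only if $\cL(aP_1+bP_2+cP_3)=\cL((a-1)P_1+(b-1)P_2+(c-1)P_3)$. For the second fact, put $D=aP_1+bP_2+cP_3$: a function in every $\cL(D-P_j)$ has pole order at most $D_j-1$ at each $P_j$, hence lies in $\cL(D-P_1-P_2-P_3)$; therefore $\cL(D-P_1-P_2-P_3)=\bigcap_j\cL(D-P_j)$, and this intersection equals $\cL(D)$ exactly when $\ell(D)=\ell(D-P_j)$ for every $j$. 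Riemann--Roch then rewrites the criterion as $i(D-P_1-P_2-P_3)=i(D)+3$, where $i$ is the index of speciality.

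Next I would express indices of speciality through forms. From the description of regular differentials, $i(D)=\dim\{F\in\K[X,Y,Z]_{n-2}:I(P_\nu,F)\ge D_\nu,\ \nu=1,2,3\}$, where $I(P,F)$ denotes the intersection multiplicity of $F$ with $\xx$ at $P$ and $D_\nu$ the multiplicity of $P_\nu$ in $D$. Let $W$ be the same space built from $D-P_1-P_2-P_3$. Inside $W$ the three conditions $I(P_\nu,F)\ge D_\nu$ amount to the vanishing of three linear functionals $\lambda_\nu\colon W\to\K$, namely the coefficient of $t_\nu^{D_\nu-1}$ in the local expansion of $F$ at $P_\nu$ in a fixed uniformizer $t_\nu$; so $i(D)=\dim W-\operatorname{rank}(\lambda_1,\lambda_2,\lambda_3)$. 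Consequently $(a,b,c)$ is a pure gap if and only if $(\lambda_1,\lambda_2,\lambda_3)\colon W\to\K^3$ is surjective, equivalently there exist forms $F_1,F_2,F_3$ of degree $n-2$ with $I(P_1,F_1)=a-1$, $I(P_2,F_1)\ge b$, $I(P_3,F_1)\ge c$ and cyclically.

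The technical heart is to describe the tangency structure of degree-$(n-2)$ forms at $\{P_1,P_2,P_3\}$ precisely enough to decide this surjectivity and to compute $\ell(D)$. Here hypothesis \eqref{curvaX} enters decisively: the three distinguished lines satisfy $I(P_\nu,\ell_\mu)\in\{0,1,n\}$ in a cyclic pattern, so a monomial $\ell_1^{\alpha}\ell_2^{\beta}\ell_3^{\gamma}G$ has completely controlled contact at each $P_\nu$; and one must show that such monomials already realize every achievable tangency vector and yield the exact dimension of each subspace $\{F\in\K[X,Y,Z]_{n-2}:I(P_\nu,F)\ge D_\nu\}$ --- equivalently, an explicit closed formula for $\ell(aP_1+bP_2+cP_3)$ for all $a,b,c\ge0$. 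This is the analysis behind the curves \eqref{curvaX}, and I expect it to be the main obstacle; the rest is linear algebra and a polynomial identity. Granting the exact formula, a direct computation identifies the triples for which the joint leading-term map is onto as precisely those of the form $(a,b,c)=(kn+j+r+1,\ in+k+s+1,\ jn+i+t+1)$ with $i,j,k\ge0$, $d:=i+j+k\le n-3$, and $r,s,t\in\{0,\dots,n-3-d\}$; for such a triple $\deg D=d(n+1)+(r+s+t)+3$, and the formula gives $\ell(D)=\ell(D-P_1-P_2-P_3)=\binom{d+2}{2}=\tfrac{1}{2}(d+1)(d+2)$, as asserted.

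It remains to count. For fixed $d$ there are $\binom{d+2}{2}$ admissible $(i,j,k)$ and $(n-2-d)^3$ admissible $(r,s,t)$, and the assignment $(i,j,k,r,s,t)\mapsto(a,b,c)$ is injective, since from $a,b,c$ one recovers $k,i,j$ (using $j+r,\,k+s,\,i+t\le n-3<n$), hence $d$, hence $r,s,t$. Therefore
$$\#G_0(P_1,P_2,P_3)=\sum_{d=0}^{n-3}\binom{d+2}{2}(n-2-d)^3=\sum_{e=1}^{n-2}\binom{n-e}{2}e^3.$$
Expanding $\binom{n-e}{2}e^3$ and applying Faulhaber's formulas shows the right-hand sum is a polynomial in $n$ of degree $6$, as is $\tfrac{1}{30}(g-1)g(2g-1)$ once $g=\binom{n}{2}$ is substituted; so the two agree provided they agree for seven values of $n$, which a short check confirms (alternatively, sum directly using $\sum_{e\le N}e^3=\binom{N+1}{2}^2$). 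This completes the proof, modulo the dimension analysis of \eqref{curvaX}.
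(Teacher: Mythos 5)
Your reduction is sound as far as it goes: the equivalence of ``pure gap'' with $\ell(D)=\ell(D-P_1-P_2-P_3)$, the passage to indices of speciality, and the reformulation via surjectivity of the three leading-coefficient functionals on the space of adjoint forms of degree $n-2$ are all correct, and your final count (injectivity of the parametrization plus the degree-$6$ polynomial identity) is fine. But the proof has a genuine gap exactly where you flag it: you ``grant'' an exact closed formula for $\ell(aP_1+bP_2+cP_3)$ for all $a,b,c\ge 0$ and then assert that ``a direct computation identifies the triples'' for which the joint map is onto as precisely the stated ones. That granted formula and that computation \emph{are} the theorem; nothing in the proposal establishes either inclusion. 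In particular it is not clear that the monomials $\ell_1^{\alpha}\ell_2^{\beta}\ell_3^{\gamma}G$ realize every achievable tangency vector, and no argument is given that the candidate triples are the only ones for which the rank is $3$.

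For comparison, the paper splits the two directions and handles each with concrete tools. For the inclusion ``listed triples are pure gaps,'' it observes that $S_d=(kn+j)P_1+(in+k)P_2+(jn+i)P_3$ satisfies $S_d=\mbox{\rm div}(x^iy^j)+d(nP_1+P_2)$, hence is linearly equivalent to $d$ times the line section $Z=0$; this gives $\ell(S_d)=\tfrac12(d+1)(d+2)$, and a Riemann--Roch computation against the canonical divisor $S_{n-2}$ (using $\ell(e((n-1)P_1-P_3))=\tfrac12(e-1)(e-2)$ from Theorem \ref{dim2}) shows the dimension is unchanged after adding $e(P_1+P_2+P_3)$ with $d+e=n-2$, which forces all intermediate triples to be pure gaps with the claimed dimension. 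For the reverse inclusion, it does not go through adjoint forms at all: it uses the explicit Kim-maps of Theorem \ref{kim-map}, the membership of $(a,\gb(a),0)$ etc.\ in $H(P_1,P_2,P_3)$, Theorem \ref{thm_faces}, and Lemma \ref{GG} (no coordinate of a pure gap is divisible by $n-1$) to pin down the residues $a_0,b_0,c_0$ and derive the inequalities $0\le r,s,t\le n-3-d$. To complete your argument you would need to supply the analogue of both steps; the adjoint-form framework is a legitimate alternative route, but as written it postpones rather than performs the essential dimension analysis.
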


This paper is organized as follows. In Section \ref{sec2}, we  establish notation and collect results that will be used throughout the paper. In Section \ref{sec3},  we prove Theorem \ref{gap1}  which provides the Weierstrass semigroup and the gap sequence of $\xx$ at any point $P \in \{P_1,P_2,P_3\}$. In Section \ref{sec4}, we  determine  the dimension of certain Riemann-Roch spaces and explicitly describe  the set of pure gaps of $\xx$ at $(P_1,P_2)$.  In Section \ref{sec5}, we  solve the problem for the case of pure gaps at $(P_1,P_2,P_3)$. In Section \ref{sec6},  we apply the  previous results to construct Goppa codes with good parameters, and  we present particular codes establishing  new records in the Mint database \cite{MinT}.

\section{Preliminaries} \label{sec2}

\indent

Let $\xx$ be a projective, geometrically irreducible, nonsingular algebraic curve of genus $g$ defined over an algebraically closed field $\K$, and let $(P_1, \ldots, P_r)$ be an $r$-tuple of distinct points on $\xx$. For a divisor $D$ on $\xx$, the {\it Riemann-Roch space} associated to $D$ is $\cL(D) := \{ f \in \K(\xx) : \mbox{div}(f) \geq - D \} \cup \{0\}$, and its dimension is denoted by $\ell(D)$.
\\

For $\ga = (\ga_1,\dots,\ga_r) \in \N_0^r$ and $i \in \{1,\dots,r\}$, set
$$
\nabla^r_i(\ga) := \{(\gb_1,\dots,\gb_r) \in H(P_1,\dots,P_r) : \gb_i=\ga_i \ \text{and} \ \gb_j \leq \ga_j \ \forall j \neq i\}.
$$

The following result was proven by Carvalho and Torres in \cite{CT} \footnote{Their original result  assumes that the ground field $\F$ is a perfect field with $\#\F \geq r$. We consider  algebraically closed fields  for the sake of simplicity.}.

\begin{theorem}\label{thm_faces}
Let $\ga = (\ga_1, \ldots, \ga_r) \in \N_0^r$. Then the following are equivalent.
\begin{enumerate}[\rm(1)]
\item $\ga \in G_0(P_1, \dots, P_r)$
\item $\nabla^r_i(\ga) = \emptyset$, for all $i=1,\ldots,r$
\item $\ell (\sum_{i=1}^r \ga_iP_i) = \ell(\sum_{i=1}^r (\ga_i-1)P_i)$.
\end{enumerate}
Moreover, if $\ga \in G_0(P_1, \dots, P_r)$ then $\ga_i \in G(P_i)$ for all $i=1,\ldots,r$.
\end{theorem}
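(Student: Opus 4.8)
The plan is to prove the chain of implications $(3)\Rightarrow(2)\Rightarrow(1)\Rightarrow(3)$, together with the final assertion, working throughout with the elementary dimension estimate that $\ell(D)\le\ell(D+P)\le\ell(D)+1$ for any divisor $D$ and point $P$, and the basic fact that $\ell(D+P)=\ell(D)+1$ if and only if there exists $f\in\cL(D+P)$ with a pole of exact order one more at $P$ than forced by $D$. First I would fix notation: write $D_\ga:=\sum_{i=1}^r\ga_iP_i$ and, for each $j$, $D_\ga^{(j)}:=D_\ga-P_j=\sum_{i\ne j}\ga_iP_i+(\ga_j-1)P_j$. By definition $\ga\in G_0(P_1,\dots,P_r)$ means $\ell(D_\ga)=\ell(D_\ga^{(j)})$ for every $j=1,\dots,r$.

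For $(1)\Leftrightarrow(2)$: I would show that $\nabla^r_i(\ga)\ne\emptyset$ is exactly the obstruction to $\ell(D_\ga)=\ell(D_\ga^{(i)})$. If $\gb=(\gb_1,\dots,\gb_r)\in\nabla^r_i(\ga)$, pick $f\in\K(\xx)$ with $\mathrm{div}_\infty(f)=\sum_k\gb_kP_k$; since $\gb_k\le\ga_k$ for $k\ne i$ and $\gb_i=\ga_i$, this $f$ lies in $\cL(D_\ga)$ but not in $\cL(D_\ga^{(i)})$ (it has pole order exactly $\ga_i$ at $P_i$), so $\ell(D_\ga)>\ell(D_\ga^{(i)})$, i.e. $\ga\notin G_0$. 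Conversely, if $\ell(D_\ga)>\ell(D_\ga^{(i)})$ for some $i$, take $f\in\cL(D_\ga)\setminus\cL(D_\ga^{(i)})$; then $f$ has pole order exactly $\ga_i$ at $P_i$ and pole order at most $\ga_k$ at each $P_k$ with $k\ne i$, and (since $\K$ is algebraically closed and $f$ can have no other poles as $\mathrm{div}_\infty(f)\le D_\ga$) one checks $\mathrm{div}_\infty(f)=\sum_k\gb_kP_k$ with $\gb_i=\ga_i$ and $\gb_k\le\ga_k$; thus $\gb\in\nabla^r_i(\ga)$. Taking the contrapositive over all $i$ gives $(1)\Leftrightarrow(2)$; the subtle point here — and where I expect the main care to be needed — is arguing that the pole order at $P_i$ is \emph{exactly} $\ga_i$ rather than something smaller but still $>\ga_i-1$, which is immediate, and that no ``fractional'' room is lost, which is why the $\le$-inequalities in the definition of $\nabla$ are the right ones.

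For $(1)\Leftrightarrow(3)$: condition (3) is literally $\ell(D_\ga)=\ell(D_\ga^{(1)})=\cdots$ no — (3) only asserts $\ell(D_\ga)=\ell(D_\ga - \sum_i P_i)$, a single equality, so I must show it is equivalent to the $r$ separate equalities defining $G_0$. The key is the telescoping chain
\[
\ell\Big(\sum_{i=1}^r\ga_iP_i\Big)\ \ge\ \ell\Big(\sum_{i=1}^r\ga_iP_i-P_1\Big)\ \ge\ \cdots\ \ge\ \ell\Big(\sum_{i=1}^r(\ga_i-1)P_i\Big),
\]
in which each successive step drops the dimension by $0$ or $1$. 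If all $r$ steps drop by $0$ we get (3); conversely, if the two ends are equal then every intermediate step drops by $0$, and in particular $\ell(D_\ga)=\ell(D_\ga-P_1)=\ell(D_\ga^{(1)})$; by symmetry of the argument (reordering which point is removed first) each $\ell(D_\ga)=\ell(D_\ga^{(j)})$, giving (1). This reordering step needs the observation that the value $\ell(D_\ga-\sum_iP_i)$ does not depend on the order of subtraction, which is trivial, but one must phrase the monotone-chain argument for each $j$ separately. Finally, for the last assertion: if $\ga\in G_0$ then in particular $\nabla^r_i(\ga)=\emptyset$; were $\ga_i\in H(P_i)$, there would be $f$ with $\mathrm{div}_\infty(f)=\ga_iP_i$, and then $(0,\dots,0,\ga_i,0,\dots,0)\in\nabla^r_i(\ga)$ (all other coordinates $0\le\ga_k$), a contradiction — hence $\ga_i\in G(P_i)$ for every $i$.
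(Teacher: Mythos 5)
The paper itself offers no proof of Theorem \ref{thm_faces}; it is quoted verbatim from Carvalho and Torres \cite{CT}, so there is no internal argument to compare yours against. Judged on its own terms, your treatment of $(1)\Leftrightarrow(2)$ is correct: extracting $f\in\cL(D_\ga)\setminus\cL(D_\ga-P_i)$ and reading off $\mathrm{div}_\infty(f)$ gives precisely the equivalence between $\nabla^r_i(\ga)\neq\emptyset$ and $\ell(D_\ga)>\ell(D_\ga-P_i)$, and your telescoping chain plus reordering correctly yields $(3)\Rightarrow(1)$. The final assertion that $\ga_i\in G(P_i)$ is also handled correctly.

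The gap is in $(1)\Rightarrow(3)$ (equivalently $(2)\Rightarrow(3)$). You write ``if all $r$ steps drop by $0$ we get (3)'', but condition (1) only tells you that the \emph{first} step of each reordered chain drops by $0$, i.e.\ $\ell(D_\ga)=\ell(D_\ga-P_j)$ for each $j$; it says nothing directly about the intermediate steps such as $\ell(D_\ga-P_1)$ versus $\ell(D_\ga-P_1-P_2)$, and in general $\ell(D)=\ell(D-P)$ together with $\ell(D)=\ell(D-Q)$ does not force $\ell(D-P)=\ell(D-P-Q)$. The missing argument is supplied by condition (2): if some intermediate step dropped, say $\ell(E)>\ell(E-P_k)$ with $E=D_\ga-P_1-\cdots-P_{k-1}$, then any $f\in\cL(E)\setminus\cL(E-P_k)$ satisfies $v_{P_k}(f)=-\ga_k$ exactly and $v_{P_j}(f)\ge-\ga_j$ for $j\ne k$ (because $E\le D_\ga$), so $\mathrm{div}_\infty(f)$ produces an element of $\nabla^r_k(\ga)$, contradicting (2). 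This is the same function-extraction you already use for $(1)\Leftrightarrow(2)$, merely applied to the intermediate divisors of the chain; once you insert it, the cycle $(3)\Rightarrow(1)\Rightarrow(2)\Rightarrow(3)$ closes and the proof is complete.
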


Now, suppose that $r=2$. For a gap $a$ at $P_1$,  define $\gb_a := \min \{ t  :  (a,t) \in H(P_1, P_2) \}$. In \cite[Lemma 2.6]{kim}, Kim proved that $\{  \gb_a : a \in G(P_1) \}  = G(P_2)$. Thus  $\gb : a \mapsto \beta_a$ is a bijective map from $G(P_1)$ to $G(P_2)$, named {\it Kim-map}. Let $n_1 < n_2 < \dots < n_g$ and $m_1 < m_2 < \dots < m_g$ be integers such that $G(P_1) = \{ n_1, \dots, n_g \}$ and $G(P_2) = \{ m_1, \dots, m_g \}$. Thus  $\gb : G(P_1) \rightarrow G(P_2)$ is given by $n_i \mapsto m_{\gs(i)}$ for some permutation $\gs$ of the set
$
\N_{\leq g} := \{ 1, \dots, g \}.
$
The graph of $\beta$ is denoted by $\Gamma(P_1,P_2)$, that is,
\begin{equation}\label{graf-beta}
\Gamma(P_1,P_2) := \{(n_i,\beta(n_i)) : i = 1, \ldots, g \}.
\end{equation}
In \cite[Lemma 2]{homma96}, Homma characterized $\Gamma(P_1,P_2)$ in $(G(P_1) \times G(P_2)) \cap H(P_1,P_2)$ as follows.

\begin{lemma} \label{lemma-homma}
Let $\Gamma'$ be a subset of $\Gamma(P_1,P_2)$ in $(G(P_1) \times G(P_2)) \cap H(P_1,P_2)$. If there exists a permutation $\gs$ of $\N_{\leq g} = \{ 1, \dots, g \}$ such that $\Gamma' = \{(n_i, m_{\gs(i)}) : i = 1, \ldots, g\}$, then  $\Gamma' = \Gamma(P_1,P_2)$.
\end{lemma}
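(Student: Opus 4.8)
The plan is to compare the permutation $\gs$ given in the statement with the permutation $\gs_0$ determined by the Kim-map and to show they coincide. Write $G(P_1) = \{n_1 < \cdots < n_g\}$ and $G(P_2) = \{m_1 < \cdots < m_g\}$ as in the statement. By \cite[Lemma 2.6]{kim}, $\beta$ is a well-defined bijection from $G(P_1)$ onto $G(P_2)$, so there is a unique permutation $\gs_0$ of $\N_{\leq g}$ with $\beta(n_i) = m_{\gs_0(i)}$ for all $i$; equivalently, $\Gamma(P_1,P_2) = \{(n_i, m_{\gs_0(i)}) : i = 1, \ldots, g\}$. Since $\Gamma' = \{(n_i, m_{\gs(i)}) : i = 1, \ldots, g\}$ by hypothesis, it suffices to prove $\gs = \gs_0$.

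The only feature of $\Gamma'$ I would use, beyond its being the graph of a permutation, is that $\Gamma' \subseteq H(P_1,P_2)$, so $(n_i, m_{\gs(i)}) \in H(P_1,P_2)$ for every $i \in \N_{\leq g}$. By the definition $\beta(n_i) = \min\{\, t : (n_i, t) \in H(P_1,P_2) \,\}$, the value $t = m_{\gs(i)}$ is admissible, hence $\beta(n_i) \leq m_{\gs(i)}$, that is, $m_{\gs_0(i)} \leq m_{\gs(i)}$. Since $m_1 < m_2 < \cdots < m_g$, this is equivalent to $\gs_0(i) \leq \gs(i)$, and it holds for all $i$.

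Finally, I would upgrade these pointwise inequalities to equalities: $\gs$ and $\gs_0$ are permutations of $\{1,\dots,g\}$, so $\sum_{i=1}^{g}\gs_0(i) = \sum_{i=1}^{g}\gs(i) = g(g+1)/2$, and combined with $\gs_0(i)\leq\gs(i)$ for every $i$ this forces $\gs_0(i)=\gs(i)$ for all $i$; hence $\Gamma' = \Gamma(P_1,P_2)$. I do not anticipate a real obstacle: the argument is short bookkeeping, and its only inputs — the bijectivity of the Kim-map and the minimality built into the definition of $\beta$ — are already available. The one point needing care is the reading of the containment hypothesis, which should be understood as $\Gamma' \subseteq (G(P_1)\times G(P_2)) \cap H(P_1,P_2)$, so that the membership $(n_i, m_{\gs(i)}) \in H(P_1,P_2)$ may be used; under the more literal reading $\Gamma' \subseteq \Gamma(P_1,P_2)$ the conclusion is immediate, since $\#\Gamma' = g = \#\Gamma(P_1,P_2)$.
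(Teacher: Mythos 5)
Your proof is correct; note that the paper itself gives no proof of this lemma (it is quoted as Lemma~2 of Homma's 1996 paper), and your argument is essentially the standard one from that source: minimality of $\beta(n_i)$ plus $(n_i,m_{\gs(i)})\in H(P_1,P_2)$ gives $m_{\gs_0(i)}\leq m_{\gs(i)}$, hence $\gs_0(i)\leq\gs(i)$, and the equality of $\sum_i\gs_0(i)$ and $\sum_i\gs(i)$ forces $\gs_0=\gs$. Your reading of the (awkwardly phrased) containment hypothesis as $\Gamma'\subseteq (G(P_1)\times G(P_2))\cap H(P_1,P_2)$ is indeed the intended one, and that is the only way the lemma has nontrivial content.
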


Define $R(\gs) = \{  (i,j) \in \mathbb{N}_{\leq g} \times \mathbb{N}_{\leq g} \colon i < j \text{ and } \gs(i) > \gs(j) \}$.
In \cite[Theorem 2.1]{HK}, Homma and Kim described the set $G_0(P_1,P_2)$ as follows.

\begin{theorem} \label{homma-kim}
The set of all pure gaps at $(P_1, P_2)$ is given by
$
G_0(P_1, P_2) = \{  (n_i, m_{\gs(j)})  : \; (i,j) \in  R(\gs) \}.
$
In particular, $\# G_0(P_1, P_2) = \# R(\gs)$.
\end{theorem}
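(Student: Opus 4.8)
The plan is to read the answer off the combinatorial characterization of pure gaps in Theorem~\ref{thm_faces}. For $r=2$ that theorem says that $(a,b)\in G_0(P_1,P_2)$ precisely when $\nabla^2_1((a,b))=\emptyset$ and $\nabla^2_2((a,b))=\emptyset$, in which case necessarily $a\in G(P_1)$ and $b\in G(P_2)$. The first step is to convert each emptiness condition into an inequality involving the Kim-map. If $a\in H(P_1)$ there is a function with pole divisor $aP_1$, so $(a,0)\in H(P_1,P_2)$ and $\nabla^2_1((a,b))\neq\emptyset$; hence we may assume $a\in G(P_1)$, and then $\nabla^2_1((a,b))=\{(a,t)\in H(P_1,P_2):t\le b\}$ is empty exactly when $b<\beta_a=\min\{t:(a,t)\in H(P_1,P_2)\}$, the minimum existing by Kim's lemma. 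By the symmetric argument, putting $\gamma_b:=\min\{s:(s,b)\in H(P_1,P_2)\}$ for $b\in G(P_2)$ --- which is the Kim-map of the reversed pair $(P_2,P_1)$, since $H(P_2,P_1)$ is the image of $H(P_1,P_2)$ under swapping coordinates, and which therefore restricts to a bijection $G(P_2)\to G(P_1)$ --- one gets that $\nabla^2_2((a,b))=\emptyset$ exactly when $b\in G(P_2)$ and $a<\gamma_b$. Consequently
$$
G_0(P_1,P_2)=\{(a,b):a\in G(P_1),\ b\in G(P_2),\ b<\beta_a\ \text{and}\ a<\gamma_b\}.
$$

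The decisive step is to express $\gamma$ through the permutation $\gs$. Since $k\mapsto\gamma_{m_k}$ is a bijection from $\N_{\le g}$ onto $G(P_1)=\{n_1,\dots,n_g\}$, we may write $\gamma_{m_k}=n_{\tau(k)}$ for a permutation $\tau$ of $\N_{\le g}$. For each $i$ the point $(n_i,m_{\gs(i)})=(n_i,\beta(n_i))$ lies in $\Gamma(P_1,P_2)\subseteq H(P_1,P_2)$, so by definition of $\gamma$ we have $\gamma_{m_{\gs(i)}}\le n_i$, i.e.\ $n_{\tau\gs(i)}\le n_i$, i.e.\ $\tau\gs(i)\le i$ for every $i$; a permutation of $\N_{\le g}$ that never increases an index must be the identity, so $\tau\gs=\mathrm{id}$ and $\gamma_{m_{\gs(i)}}=n_i$ for all $i$. (Equivalently, this identifies the coordinate-swap of $\Gamma(P_1,P_2)$ with $\Gamma(P_2,P_1)$, which one could also obtain from Lemma~\ref{lemma-homma} applied to the reversed pair.)

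To finish, let $(a,b)$ be a pure gap. Write $a=n_i$, and, using $b\in G(P_2)$, write $b=m_\ell$ and put $j=\gs^{-1}(\ell)$, so $b=m_{\gs(j)}$. Then $b<\beta_a=\beta(n_i)=m_{\gs(i)}$ is equivalent to $\gs(j)<\gs(i)$, and $a<\gamma_b=\gamma_{m_{\gs(j)}}=n_j$ is equivalent to $i<j$; jointly these say precisely that $i<j$ and $\gs(i)>\gs(j)$, that is $(i,j)\in R(\gs)$. Conversely, for any $(i,j)\in R(\gs)$ the pair $(n_i,m_{\gs(j)})$ manifestly satisfies $n_i\in G(P_1)$, $m_{\gs(j)}\in G(P_2)$, $m_{\gs(j)}<m_{\gs(i)}=\beta_{n_i}$ and $n_i<n_j=\gamma_{m_{\gs(j)}}$, so it is a pure gap. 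This proves $G_0(P_1,P_2)=\{(n_i,m_{\gs(j)}):(i,j)\in R(\gs)\}$; and since $(i,j)\mapsto(n_i,m_{\gs(j)})$ is injective on $R(\gs)$ (because $n_1<\cdots<n_g$, $m_1<\cdots<m_g$ and $\gs$ is one-to-one), we conclude $\#G_0(P_1,P_2)=\#R(\gs)$.

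I expect the only genuine obstacle to be the symmetry identity $\gamma_{m_{\gs(i)}}=n_i$ of the second paragraph: it is what lets a single permutation $\gs$ govern both emptiness conditions at once, and proving it cleanly rests on recognizing $\gamma$ as exactly the Kim-map of the reversed pair (so Kim's bijectivity applies to it), together with the elementary observation that a bijection of $\N_{\le g}$ that never increases an index is the identity. The remaining steps are routine bookkeeping with the two gap sequences.
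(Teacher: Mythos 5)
Your proof is correct. The paper offers no argument for this statement---it is quoted from Homma and Kim \cite[Theorem 2.1]{HK}---so there is no in-paper proof to compare against; your self-contained derivation is essentially the original one: translate $\nabla^2_1(\alpha)=\emptyset$ and $\nabla^2_2(\alpha)=\emptyset$ into $b<\beta_a$ and $a<\gamma_b$ via the forward and reversed Kim-maps, then establish the symmetry $\gamma_{m_{\sigma(i)}}=n_i$. That symmetry is indeed the crux, and your argument for it is sound: $(n_i,\beta(n_i))\in H(P_1,P_2)$ forces $\tau\sigma(i)\le i$ for all $i$, and a permutation of $\N_{\leq g}$ that never increases an index must be the identity (e.g.\ because $\sum_i \tau\sigma(i)=\sum_i i$), whence $\tau=\sigma^{-1}$. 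The remaining bookkeeping and the injectivity argument for the cardinality claim are also correct.
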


Let $G$ be a divisor on $\mathcal{X}$ defined over $\mathbb{F}_{q}$, and let $D=Q_{1}+\cdots+Q_{m}$ be another divisor on  $\mathcal{X}$  where $Q_{1}, \ldots, Q_{m}$ are distinct $\mathbb{F}_{q}$-rational points, each not belonging to the support of $G$. Then the Goppa (or algebraic geometry) code  $C_{\Omega}(D, G)$ is   the image of the $\F_q$-linear map $\operatorname{res}: \Omega(G-D) \rightarrow \mathbb{F}_{q}^{n}$ defined  by 
$$\eta \mapsto\left(\operatorname{res}_{Q_{1}}(\eta), \operatorname{res}_{Q_{2}}(\eta), \ldots, \operatorname{res}_{Q_{m}}(\eta)\right),$$
where $\Omega(G-D)$ is the $\F_q$-space of differentials $\eta$ on $\xx$ such that $\eta = 0$ or $\mbox{div}(\eta) \geq G-D$. Its  length, dimension and minimum distance are denoted by $\mathrm{len}( C_{\gO}(D,G))$, $\dim( C_{\gO}(D,G))$, and $\mathrm{d}( C_{\gO}(D,G))$, respectively. The dimension of $C_{\Omega}(D, G)$ can be estimated using the Riemann-Roch theorem since it is equal to $i(G-D)-i(G)$, where $i(A) = \ell(A) - \deg(A) + g-1$ is the index of specialty of a divisor $A$.
One of the main features of this code is that its minimum distance satisfies the so-called Goppa bound, namely
$$
\mathrm{d}( C_{\gO}(D,G)) \geq \deg(G) - (2g-2).
$$

Goppa codes with larger minimum distance can be constructed from curves for which the pure gaps in some of their points are known. The following result illustrates this fact.

\begin{theorem} \label{carvalho-torres} \cite[Theorem 3.4]{CT} Let $P_{1}, \ldots, P_{s}, Q_{1}, \ldots, Q_{m}$ be pairwise distinct $\mathbb{F}_{q}$-rational points  on $\mathcal{X}$, and let $\left(a_{1}, \ldots, a_{s}\right)$, $\left(b_{1}, \ldots, b_{s}\right) \in \mathbb{N}_{0}^{s}$ be pure gaps at $\left(P_{1}, \ldots, P_{s}\right)$ with
$a_{i} \leq b_{i}$ for each $i .$ Consider the divisors $D=Q_{1}+\cdots+Q_{m}$ and $G=\sum_{i=1}^{s}\left(a_{i}+b_{i}-1\right) P_{i}$. If every $\left(c_{1}, \ldots, c_{s}\right) \in \mathbb{N}_{0}^{s}$ with $a_{i} \leq c_{i} \leq b_{i}$ for $i \in\{1, \ldots, s\}$ is a pure gap at $\left(P_{1}, \ldots, P_{s}\right)$,
then
$$
\mathrm{d}( C_{\gO}(D,G)) \geq \operatorname{deg}(G)-(2 g-2)+\sum_{i=1}^{s}\left(b_{i}-a_{i}+1\right).
$$
\end{theorem}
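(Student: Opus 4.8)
The statement is the standard improvement of the Goppa bound coming from pure gaps, so I would reconstruct its proof along the following lines. \textbf{Step 1 (reduction to a dimension-stability estimate).} A nonzero codeword of $C_\Omega(D,G)$ of weight $w$ is $\res(\eta)$ for some $\eta\neq 0$ with $\operatorname{div}(\eta)\geq G-D$, having nonzero residue at exactly $w$ of the points $Q_1,\dots,Q_m$; call their sum $E$, so $0\le E\le D$ and $\deg E=w$. Since $D$ is reduced, $\eta$ has at worst a simple pole at each $Q_j$, and a vanishing residue forces no pole there; hence $\operatorname{div}(\eta)\geq G-E$, i.e.\ $\eta\in\Omega(G-E)$, while $\eta\notin\Omega(G)$ because it has genuine poles on $\supp(E)$. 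Consequently $i(G-E)>i(G)$, and therefore
\[
\mathrm{d}(C_\Omega(D,G))\ \geq\ \min\{\deg E : 0\le E\le D,\ i(G-E)>i(G)\}.
\]
Writing $\delta:=\deg(G)-(2g-2)+\sum_{i=1}^s(b_i-a_i+1)$, it suffices to prove that $i(G-E)=i(G)$ for every effective $E\le D$ with $\deg E\le \delta-1$.

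\textbf{Step 2 (Riemann--Roch bookkeeping).} Applying the identity $\ell(A)-i(A)=\deg(A)-g+1$ to $A=G$ and to $A=G-E$ and subtracting shows that $i(G-E)=i(G)$ is equivalent to $\ell(G-E)=\ell(G)-\deg E$, i.e.\ to the points of $E$ imposing independent conditions on $\cL(G)$. Dually, any $\eta\in\Omega(G-E)$ satisfies $\operatorname{div}(\eta)=G-E+A$ with $A\ge 0$, and then $\deg A=2g-2-\deg G+\deg E\le (2g-2-\deg G)+(\delta-1)=\sum_i(b_i-a_i+1)-1$. Thus the whole question becomes: can a differential whose canonical divisor has excess $A$ of degree at most $\sum_i(b_i-a_i+1)-1$ acquire a genuine pole on $E$? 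I must show the answer is no.

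\textbf{Step 3 (using the box of pure gaps; the main obstacle).} This is the crux and where the hypothesis is used in full. The relation $K\sim G-E+A$ rewrites as $\sum_i(a_i+b_i-1)P_i\sim K+E-A$, and I would feed it into the pure-gap characterization of Theorem \ref{thm_faces}: for every tuple $(c_1,\dots,c_s)$ in the box $\prod_i[a_i,b_i]$ one has $\ell(\sum_i c_iP_i)=\ell(\sum_i(c_i-1)P_i)$, and moreover each $\nabla^s_i$ is empty. The plan is to run an induction on $\deg E$, adjoining the points of $E$ one at a time and showing that each one strictly lowers $\ell(G-\cdot)$ (equivalently, imposes an independent condition), the available slack being governed precisely by the simultaneous emptiness of all the $\nabla^s_i$ over the whole box; a short degree count then matches the length of this descending chain of pure gaps with $\sum_i(b_i-a_i+1)$, explaining why the gain over the Goppa bound is exactly $\delta-(\deg G-(2g-2))=\sum_i(b_i-a_i+1)$. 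The genuine difficulty lies entirely here: one must control not only the corner gaps $(a_i)$ and $(b_i)$ but every intermediate tuple simultaneously, and convert the existence of an excess divisor $A$ of too-small degree into a violation of the pure-gap property of one specific tuple in the box. Granting Step 3, the bound $\mathrm{d}(C_\Omega(D,G))\ge\delta$ follows at once from Steps 1--2.
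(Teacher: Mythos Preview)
This theorem is not proved in the present paper; it is quoted from Carvalho and Torres \cite[Theorem~3.4]{CT} and used as a black box in Section~\ref{sec6}. There is therefore no in-paper argument to compare your attempt against.

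As a standalone sketch, your Steps~1 and~2 are the standard and correct reduction: a minimum-weight codeword yields $\eta\in\Omega(G-E)\setminus\Omega(G)$ with $\deg E=w$, and by Riemann--Roch the desired bound is equivalent to $i(G-E)=i(G)$ whenever $\deg E\le\delta-1$. The degree bookkeeping in Step~2 is also right.

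The gap is that Step~3 is the entire content of the theorem, and you do not carry it out. You describe a plan (``induction on $\deg E$'', ``descending chain of pure gaps'') and then write ``Granting Step~3, the bound follows'': that is precisely assuming what has to be proved. Moreover, the mechanism you gesture at---adjoining the points of $E$ one at a time and tracking $\ell(G-\cdot)$---is aimed at the wrong place: the points $Q_j$ of $E$ are disjoint from $\{P_1,\dots,P_s\}$, so the pure-gap hypothesis says nothing about them directly. The way the hypothesis actually enters (in \cite{CT}, via their Proposition~3.3 and the characterization in Theorem~\ref{thm_faces}) is on the $P_i$-side: one uses the coordinatewise equalities $\ell(\sum c_iP_i)=\ell(\sum c_iP_i-P_j)$ for every $(c_1,\dots,c_s)$ in the box to walk monotonically from $\sum b_iP_i$ down to $\sum(a_i-1)P_i$ without changing $\ell$, and it is this collapse of $\sum(b_i-a_i+1)$ many steps that produces the extra term in the bound. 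Your outline is sound up to the point where the real argument begins, but Step~3 as written is both incomplete and not pointed at the mechanism that makes the proof work.
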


\section{Curves with a special triple  $(P_1,P_2,P_3)$} \label{sec3}

\indent

Let $\xx : F(X,Y,Z)=0$ be a projective smooth plane curve of degree $n+1>3$ defined over an algebraically closed field $\K$, and suppose that three distinct lines $\ell_1$, $\ell_2$, and $\ell_3$ cut out on $\xx$ the respective divisors $nP_1+P_2$, $nP_2+P_3$, and $nP_3+P_1$.
Clearly, the three points are distinct, $\ell_1 \cap \ell_2 =\{P_2\}$, $\ell_2 \cap \ell_3 =\{P_3\}$, and $\ell_3 \cap \ell_1 =\{P_1\}$.
In particular, the  three lines are  non-concurrent. Therefore, after a suitable projective transformation, we may assume that the three points are $(1:0:0)$, $(0:1:0)$ and $(0:0:1)$, and that
\begin{equation}\label{axes}
\begin{cases} 
F(X,Y,0)=\alpha XY^n \\
F(X,0,Z)=\beta ZX^n \\
F(0,Y,Z)=\gamma YZ^n,
\end{cases} 
\end{equation}
for some  $\alpha, \beta, \gamma \in \K \setminus \{0\}$. From the three equations in \eqref{axes}, it follows that 
$$
F(X,Y,Z) = \alpha XY^n + \beta ZX^n + \gamma YZ^n + XYZ \cdot G(X,Y,Z),
$$
for some polynomial $G(X,Y,Z)$, which is either zero or   homogeneous  of degree $n-2$.  Since $\K$ is algebraically closed, we may assume  that  $\alpha=\beta= \gamma=1.$
\\

 Hereafter, $\xx$ will represent a nonsingular curve of genus $g=n(n-1)/2$ with homogeneous equation
$$
XY^{n} + YZ^{n} + ZX^{n} + XYZ \cdot G(X,Y,Z) = 0,
$$
and $P_1 = (1:0:0)$, $P_2 = (0:1:0)$, and $P_3 = (0:0:1)$. 
\\

Let $\K(\xx) = \K(x,y)$ denote the function field of $\xx$, where  $x=X/Z$ and $y=Y/Z$.  The tangent lines to $\xx$ at $P_1$, $P_2$ and $P_3$ are $Z=0$, $X=0$ and $Y=0$, respectively. Thus the principal divisor of $x$ and $y$ are given by
\begin{equation}\label{xy}
\begin{cases}
\mbox{div}(x) = -nP_1 + (n-1)P_2 + P_3 \\ 
\mbox{div}(y) = -(n-1)P_1 - P_2 + nP_3.
\end{cases} 
\end{equation}

\begin{theorem} \label{gap1}
For $P \in \{P_1,P_2,P_3\}$, the gap sequence at $P$ is given by 
$$G(P) = \{ (i-1)(n-1)+j \colon 1 \leq i \leq j \leq n-1 \},$$
 and the Weierstrass semigroup $H(P)$ is generated by $S = \{ s(n-1)+1 \colon 1 \leq s \leq n \}$. 
\end{theorem}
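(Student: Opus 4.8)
The plan is to prove the statement for $P=P_1$ and then deduce the cases $P=P_2$ and $P=P_3$ from the fact that the defining condition \eqref{curvaX} is invariant under the cyclic relabeling $(P_1,P_2,P_3)\mapsto(P_2,P_3,P_1)$. Write $S=\{s(n-1)+1:1\le s\le n\}$, let $\langle S\rangle$ be the numerical semigroup it generates, and set
$$
A:=\{(i-1)(n-1)+j:1\le i\le j\le n-1\}.
$$
A direct count gives $\#A=\sum_{i=1}^{n-1}(n-i)=n(n-1)/2=g$, and for fixed $i$ the values $j\in\{i,\dots,n-1\}$ produce the block of consecutive integers $\bigl[(i-1)(n-1)+i,\;i(n-1)\bigr]$, these blocks being pairwise disjoint. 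The strategy is a sandwich: I will show $\langle S\rangle\subseteq H(P_1)$ and, separately, that $A\cap\langle S\rangle=\emptyset$ together with $\N_0\setminus A\subseteq\langle S\rangle$. The latter two give $\N_0\setminus\langle S\rangle=A$, a set of size $g$. Since any point of a curve of genus $g$ has exactly $g$ Weierstrass gaps, the inclusion $\N_0\setminus H(P_1)\subseteq\N_0\setminus\langle S\rangle=A$ is then an equality of $g$-element sets, whence $H(P_1)=\langle S\rangle$ (the asserted description of the semigroup) and $G(P_1)=A$.

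For $\langle S\rangle\subseteq H(P_1)$ I would produce explicit functions. From the divisors \eqref{xy} one computes, for $1\le s\le n$,
$$
\operatorname{div}\bigl(x\,y^{s-1}\bigr)=-\bigl(s(n-1)+1\bigr)P_1+(n-s)P_2+\bigl(1+(s-1)n\bigr)P_3,
$$
whose coefficients at $P_2$ and $P_3$ are nonnegative, so $\operatorname{div}_\infty\bigl(x\,y^{s-1}\bigr)=\bigl(s(n-1)+1\bigr)P_1$. Hence $S\subseteq H(P_1)$, and therefore $\langle S\rangle\subseteq H(P_1)$.

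The remaining work is the combinatorial identity $\N_0\setminus\langle S\rangle=A$. Expressing a general element of $\langle S\rangle$ as a sum of $m\ge1$ generators whose indices $s\in\{1,\dots,n\}$ add up to $t$, and using that such a sum $t$ of $m$ numbers from $\{1,\dots,n\}$ runs over exactly $\{m,m+1,\dots,mn\}$, one obtains
$$
\langle S\rangle\setminus\{0\}=\bigl\{(n-1)t+m:\ m\ge1,\ m\le t\le mn\bigr\};
$$
equivalently, $N>0$ lies in $\langle S\rangle$ iff some $m\ge1$ with $m\equiv N\pmod{n-1}$ satisfies $mn\le N\le m(n^2-n+1)$. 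With this criterion: if $N=(i-1)(n-1)+j\in A$ then $N\le i(n-1)<in$, so any admissible $m$ obeys $m\le N/n<i\le n-1$; combined with $m\equiv j\pmod{n-1}$ this forces $m=j$ when $1\le j\le n-2$ (and is impossible when $j=n-1$), and then $mn\le N$ becomes $j(n-1)\le(i-1)(n-1)$, i.e. $j\le i-1$, contradicting $i\le j$; hence $A\cap\langle S\rangle=\emptyset$. Conversely, for $N\notin A$ (the case $N=0$ being trivial) I would split into two ranges. The integers lying strictly between consecutive blocks of $A$ are precisely those of the form $N=i(n-1)+k$ with $1\le k\le i\le n-2$, and taking $m=k$ satisfies the criterion. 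For $N>(n-1)^2$ one checks that each of the $n$ consecutive integers in the window $\bigl[\,n^2-2n+2,\;n^2-n+1\,\bigr]$ satisfies the criterion — using $m=N\bmod(n-1)$ when that residue is nonzero, and $m=n-1$ for the unique multiple of $n-1$ in the window, namely $N=n(n-1)$ — after which, since $n\in\langle S\rangle$, adding multiples of $n$ reaches every larger integer. This yields $\N_0\setminus\langle S\rangle=A$ and completes the argument.

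The step I expect to be the main obstacle is this last combinatorial one: in effect one must pin down the Frobenius number of $\langle S\rangle$ as $(n-1)^2$ and rule out any gap outside the blocks of $A$. The bookkeeping stays tractable because the generators of $S$ form an arithmetic progression of common difference $n-1$, so membership reduces to controlling $N\bmod(n-1)$ together with the two inequalities above; the device of a window of $n$ consecutive integers plus translation by the generator $n$ then confines the verification to finitely many explicit cases.
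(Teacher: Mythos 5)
Your proposal is correct and follows essentially the same route as the paper: exhibit the functions $xy^{s-1}$ (via the divisors in \eqref{xy}) to get $S\subseteq H(P_1)$, verify that $\N_0\setminus\langle S\rangle$ has exactly $g=n(n-1)/2$ elements, and conclude by the Weierstrass gap count. The only differences are cosmetic — you handle $P_2,P_3$ by cyclic symmetry where the paper writes down the analogous functions $y/x^s$ and $x^{s-1}/y^s$, and you carry out in full the combinatorial identification $\N_0\setminus\langle S\rangle=\{(i-1)(n-1)+j:1\le i\le j\le n-1\}$ that the paper leaves as an easy check.
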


\begin{proof}
Let $s \in \{1,\ldots,n\}$. Using the relations in  \eqref{xy}, the following are  obtained
$$
\begin{cases}
\mbox{div} ( y/x^s  ) = ((s-1)n+1) P_1 - (s(n-1)+1) P_2 + (n - s) P_3 \\ 
\mbox{div} ( x^{s-1}/y^s ) = (n-s) P_1 + ((s-1)n+1) P_2 - (s(n-1)+1) P_3 \\
\mbox{div} (xy^{s-1}) = - (s(n-1)+1) P_1 + (n-s) P_2 + ((s-1)n+1) P_3,
\end{cases} 
$$
and then
\begin{equation} \label{2808}
\begin{cases}
\mbox{div}_{\infty} \left( \frac{y}{x^s}  \right) = (s(n-1)+1) P_2 \\
\mbox{div}_{\infty} \left( \frac{x^{s-1}}{y^s}  \right) = (s(n-1)+1) P_3 \\
\mbox{div}_{\infty}(xy^{s-1}) =  (s(n-1)+1)P_1.
\end{cases}
\end{equation}
Given $P \in \{P_1,P_2,P_3\}$, it follows from \eqref{2808} that $S \subset H(P)$. Note that $|\N_0 \setminus \langle S \rangle| = \frac{1}{2} n(n-1)$, where $\langle S \rangle$ is the semigroup generated by $S$. Since $\langle S \rangle \subset H(P)$ and the number of gaps at $P$ is $\frac{1}{2} n(n-1)$, the Weierstrass semigroup $H(P)$ is generated by $S$. Now  since $G(P) = \N_0 \setminus H(P)$, one can easily check that the gap sequence at $P$ is $G(P) = \{ (i-1)(n-1)+j \colon 1 \leq i \leq j \leq n-1 \}$.
\end{proof}

\begin{theorem} \label{basis}
Let $m \in \{1, \ldots, 2g-2\}$, and write $m = d(n-1)+r$ with $0 \leq r \leq n-2$. For $k=1,2,3$, the space $\cL(mP_k)$ has dimension $\frac{d^2-d+2}{2} + \min\{r,d\}$ and basis $B_k$, where
\begin{enumerate}[]
\item $B_1 = \{1\} \cup \{x^iy^j : 1 \leq i \leq d-1 \text{ and } 0 \leq j \leq d-1-i \} \cup \{x^iy^{d-i} : 1 \leq i \leq \min\{d,r\} \}$,
\item $B_2 = \{1\} \cup \left\{\frac{y^i}{x^{i+j}} : 1 \leq i \leq d-1 \text{ and } 0 \leq j \leq d-1-i \right\} \cup \left\{\frac{y^i}{x^d} : 1 \leq i \leq \min\{d,r\} \right\}$,
\item $B_3 = \{1\} \cup \left\{\frac{x^j}{y^{i+j}} : 1 \leq i \leq d-1 \text{ and } 0 \leq j \leq d-1-i \right\} \cup \left\{\frac{x^{d-i}}{y^d} : 1 \leq i \leq \min\{d,r\} \right\}$.
\end{enumerate}
\end{theorem}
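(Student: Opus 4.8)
The plan is to show that $B_k$ is a basis of $\cL(mP_k)$ in three steps: (i) every element of $B_k$ lies in $\cL(mP_k)$; (ii) the elements of $B_k$ are linearly independent; (iii) $\#B_k=\ell(mP_k)$, with $\ell(mP_k)$ computed from the gap sequence of Theorem \ref{gap1}. Steps (i)--(iii) together force $B_k$ to be a basis, and since one checks directly that $\#B_k=1+\tfrac{(d-1)d}{2}+\min\{r,d\}=\tfrac{d^2-d+2}{2}+\min\{r,d\}$, they also yield the stated dimension. I will use throughout that $m\le 2g-2=n(n-1)-2$ forces $d\le n-1$.

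Consider first $k=1$. Expanding with \eqref{xy}, for integers $i\ge 1$, $j\ge 0$ one gets
$$\mathrm{div}(x^iy^j)=-\bigl(in+j(n-1)\bigr)P_1+\bigl(i(n-1)-j\bigr)P_2+(i+jn)P_3,$$
so $x^iy^j\in\cL(mP_1)$ exactly when $j\le i(n-1)$ and its pole order $in+j(n-1)=i+(i+j)(n-1)$ at $P_1$ is $\le m$. For $1\le i\le d-1$ and $0\le j\le d-1-i$ this pole order is at most $(d-1)+(d-1)(n-1)=(d-1)n\le d(n-1)\le m$ (using $d\le n$), and $j\le d-1-i<n-1\le i(n-1)$; for $x^iy^{d-i}$ with $1\le i\le\min\{d,r\}$ the pole order equals $d(n-1)+i=m-r+i\le m$, and again $d-i\le d-1<i(n-1)$. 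Hence $B_1\subset\cL(mP_1)$, and the pole orders at $P_1$ of its non-constant members form the set
$$\bigl\{\,i+(i+j)(n-1):1\le i\le d-1,\ 0\le j\le d-1-i\,\bigr\}\cup\bigl\{\,d(n-1)+i:1\le i\le\min\{d,r\}\,\bigr\}.$$
These are pairwise distinct: within the first set, equality of two values forces $i\equiv i'\pmod{n-1}$, hence $i=i'$ since $|i-i'|\le d-2<n-1$, and then $j=j'$; the values in the second set exceed $(d-1)n$, hence are disjoint from the first; and within the second set they strictly increase in $i$. Since the constant function has pole order $0$, all $\#B_1$ of these pole orders are distinct, which gives linear independence (functions with pairwise distinct pole orders at a single point are linearly independent).

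It remains to compute $\ell(mP_1)$. By Theorem \ref{gap1}, $G(P_1)=\{(i-1)(n-1)+j:1\le i\le j\le n-1\}$ and $\ell(mP_1)=m+1-\#\bigl(G(P_1)\cap[1,m]\bigr)$. For fixed $i$ the gaps with that index fill the integer interval $[(i-1)(n-1)+i,\,i(n-1)]$, of length $n-i$, and these blocks are pairwise disjoint and increasing in $i$. Blocks $i=1,\dots,d$ lie entirely in $[1,m]$, contributing $\sum_{i=1}^{d}(n-i)$ gaps; block $d+1$ meets $[1,m]$ in the $\max\{0,r-d\}$ gaps $d(n-1)+j$ with $d+1\le j\le r$ (and is empty, with $r\le d-1$, when $d=n-1$); blocks $i\ge d+2$ start beyond $m$. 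Hence $\#\bigl(G(P_1)\cap[1,m]\bigr)=\sum_{i=1}^{d}(n-i)+\max\{0,r-d\}$, and a short computation gives $\ell(mP_1)=\tfrac{d^2-d+2}{2}+\bigl(r-\max\{0,r-d\}\bigr)=\tfrac{d^2-d+2}{2}+\min\{r,d\}=\#B_1$. Therefore $B_1$ is a basis of $\cL(mP_1)$ with the stated dimension.

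For $k=2,3$ the argument is identical in form. By Theorem \ref{gap1} the gap sequences at $P_2$ and $P_3$ coincide with that at $P_1$, so $\ell(mP_2)=\ell(mP_3)=\ell(mP_1)$; and using \eqref{xy} one checks that the members of $B_2$ (resp.\ $B_3$) lie in $\cL(mP_2)$ (resp.\ $\cL(mP_3)$) with pole orders at $P_2$ (resp.\ $P_3$) forming exactly the list displayed above, hence pairwise distinct. The counting step from the previous paragraph then applies verbatim. I expect the only real difficulty to be bookkeeping — tracking the three intertwined relations \eqref{xy}, carrying the constraint $d\le n-1$ through the inequalities, and organizing the gap count so that the term $\min\{r,d\}$ emerges cleanly — rather than anything conceptual, since Theorem \ref{gap1} already supplies the gap sequence.
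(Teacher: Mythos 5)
Your proof is correct and follows essentially the same route as the paper: verify membership $B_k\subset\cL(mP_k)$ by computing valuations via \eqref{xy}, establish linear independence, and match $\#B_k$ against $\ell(mP_k)$ obtained from the gap sequence of Theorem \ref{gap1} (a count the paper leaves implicit and you carry out explicitly). The only real difference is in the independence step: the paper invokes the fact that monomials of degree at most $d\le n-1$ satisfy no relation on a smooth curve of degree $n+1$, whereas you use pairwise distinct pole orders at $P_k$ --- both standard, and yours applies verbatim to $B_2$ and $B_3$.
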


\begin{proof}
For $k=1,2,3$, it follows from Theorem \ref{gap1} that $\ell(mP_k) = \frac{d^2-d+2}{2} + \min\{r,d\}$, and a straightforward computation gives $\#B_k = \frac{d^2-d+2}{2} + \min\{r,d\}$. For $1 \leq i \leq d-1$ and $0 \leq j \leq d-1-i$, 

\begin{equation*}
v_{P_k}(x^iy^j)=
\begin{cases}
-(i+j)n+j \geq -m, & \text{ if } k=1 \\
in-i-j \geq 0, & \text{ if } k=2 \\
jn+i \geq 0, & \text{ if  } k=3.
\end{cases}
\end{equation*}
Since the poles of any function $x^uy^v$ with $u,v \in \Z$ lie in $\{P_1,P_2,P_3\}$ (see \eqref{xy}), it follows that $x^iy^j \in \cL(mP_1)$.
Likewise, one can check that $x^iy^{d-i} \in \cL(mP_1)$ for $1 \leq i \leq \min\{d,r\}$.
It is clear that the elements in $B_1$, given by monomials of degree at most $d \leq n-1$, are linearly independent. Therefore, $B_1$ is a basis for $\cL(mP_1)$. The analogous proofs for $B_2$ and $B_3$ follow. 
\end{proof}


\begin{theorem} \label{dim2}
Let $m \in \{1, \ldots, 2g-2\}$, and write $m = d(n-1)+r$ with $0 \leq r \leq n-2$. Then  $\cL(mP_2 - dP_1)$, $\cL(mP_3 - dP_2)$ and $\cL((mP_1 - dP_3)$ have the same dimension $\frac{1}{2}(d-1)(d-2) + \min\{r,d\}$.
\end{theorem}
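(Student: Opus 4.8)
The plan is to reduce all three equalities to one computation, so I would prove the statement for $\cL(mP_2-dP_1)$ in detail and then note that $\cL(mP_3-dP_2)$ and $\cL(mP_1-dP_3)$ are handled identically. First I would observe that the bound $m\le 2g-2=n(n-1)-2$ together with $m=d(n-1)+r\ge d(n-1)$ forces $d\le n-1$, so every exponent occurring in the basis $B_2$ of $\cL(mP_2)$ supplied by Theorem \ref{basis} is at most $d-1\le n-2$. This uniform bound is what makes the valuation bookkeeping below go through cleanly.

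The main tool is the elementary fact that if a finite-dimensional subspace $V\subseteq\K(\xx)$ has a basis whose elements have pairwise distinct valuations at a point $P$, then for every $e\in\Z$ the subspace $\{f\in V:v_P(f)\ge e\}$ has as a basis exactly those basis elements whose valuation at $P$ is $\ge e$; indeed, in a linear combination the term of least valuation cannot cancel. Applying this with $V=\cL(mP_2)$, basis $B_2$, point $P=P_1$ and $e=d$, and using $\cL(mP_2-dP_1)=\{f\in\cL(mP_2):v_{P_1}(f)\ge d\}$, the desired dimension becomes the number of elements of $B_2$ whose $P_1$-valuation is $\ge d$.

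Next I would compute the relevant valuations from \eqref{xy}: $v_{P_1}(y^i/x^{i+j})=jn+i$, $v_{P_1}(y^i/x^d)=(d-i)n+i$, and $v_{P_1}(1)=0$. The step I expect to need the most care, though it is routine, is checking that these values are pairwise distinct. Since each ``remainder'' $i$ lies in $\{1,\dots,n-1\}$ (recall $i\le d-1\le n-2$, resp. $i\le\min\{d,r\}\le n-2$), uniqueness of division by $n$ settles collisions inside each family and separates them from $v_{P_1}(1)=0$; the only potential cross-collision, $jn+i=(d-i')n+i'$, would force $i=i'$ and $j=d-i$, contradicting the defining constraint $j\le d-1-i$ of $B_2$. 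Granted distinctness, the count is immediate: the elements $y^i/x^{i+j}$ with $j\ge1$ all satisfy $jn+i\ge n>d$ and there are $\sum_{j=1}^{d-2}(d-1-j)=\tfrac12(d-1)(d-2)$ of them; those with $j=0$ have valuation $i\le d-1<d$ and drop out; all $\min\{r,d\}$ elements $y^i/x^d$ satisfy $(d-i)n+i\ge d$; and the constant $1$ drops out (for $d\ge1$). This yields $\tfrac12(d-1)(d-2)+\min\{r,d\}$.

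Finally, for $\cL(mP_3-dP_2)$ and $\cL(mP_1-dP_3)$ I would repeat the three steps with the bases $B_3$ and $B_1$ from Theorem \ref{basis} in place of $B_2$ and with $v_{P_2}$ and $v_{P_3}$ in place of $v_{P_1}$; a one-line computation from \eqref{xy} gives $v_{P_2}(x^j/y^{i+j})=jn+i$, $v_{P_2}(x^{d-i}/y^d)=(d-i)n+i$ and $v_{P_3}(x^iy^j)=jn+i$, $v_{P_3}(x^iy^{d-i})=(d-i)n+i$, i.e. exactly the expressions already analyzed. Hence the distinctness verification and the count transfer verbatim, giving the common dimension $\tfrac12(d-1)(d-2)+\min\{r,d\}$ for all three spaces. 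One could also invoke the cyclic symmetry of configuration \eqref{curvaX} under the relabeling $(P_1,P_2,P_3)\mapsto(P_2,P_3,P_1)$, but the direct computation is just as short.
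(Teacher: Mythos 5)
Your proposal is correct and follows essentially the same route as the paper: restrict the explicit basis $B_2$ (resp.\ $B_3$, $B_1$) of $\cL(mP_2)$ from Theorem \ref{basis} to those elements with $v_{P_1}\ge d$, using the strict triangle inequality (your ``pairwise distinct valuations'' lemma) to see that these form a basis of $\cL(mP_2-dP_1)$, and then count. Your verification of distinctness and of the bound $d\le n-1$ is slightly more explicit than the paper's, but the argument is the same.
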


\begin{proof}
Let $D := mP_2 - dP_1$. By Theorem \ref{basis}, $\ell(mP_2) = \frac{d^2-d+2}{2} + \min\{r,d\}$, and
$$
B_2 = \{1\} \cup \left\{\frac{y^i}{x^{i+j}} : 1 \leq i \leq d-1 \text{ and } 0 \leq j \leq d-1-i \right\} \cup \left\{\frac{y^i}{x^d} : 1 \leq i \leq \min\{d,r\} \right\}
$$
is a basis for $\cL(mP_2)$. Now since $\cL(D) \subset \cL(mP_2)$, an element $f$ belongs to $\cL(D)$ if and only if $f \in \cL(mP_2)$ and $v_{P_1}(f) \geq d$. Note that if $1 \leq i \leq d-1$ and $0 \leq j \leq d-1-i$, then $v_{P_1}(y^i/x^{i+j}) = jn+i$. Hence $v_{P_1}(y^i/x^{i+j}) \geq d$ if and only if $j > 0$. If $\min\{d,r\} > 0$, then $v_{P_1}(y^i/x^d) = (d-i)(n-1) + d \geq d$ for $1 \leq i \leq \min\{d,r\}$. This  and Strict Triangle Inequality  for $v_{P_1}$ imply that
$$
B_2 \setminus \left\{\frac{y^i}{x^i} : 0 \leq i \leq d-1 \right\}
$$
is a basis for $\cL(D)$. Therefore, $\ell(D) = \frac{d^2-d+2}{2} + \min\{r,d\} - d = \frac{1}{2}(d-1)(d-2) + \min\{r,d\}$. The proofs for the other two cases are analogous.
\end{proof}

\begin{theorem} \label{kim-map}
The map
$$
\begin{aligned}
\beta:\{ (i-1)(n-1)+j \colon 1 \leq i \leq j \leq n-1 \} & \longrightarrow \{ (i-1)(n-1)+j \colon 1 \leq i \leq j \leq n-1 \} \\
(i-1)(n-1)+j & \longmapsto (n-j-1)(n-1) + n+i-j-1
\end{aligned}
$$
is a bijection, and $\beta : G(P_1) \longrightarrow G(P_2)$, $\beta : G(P_2) \longrightarrow G(P_3)$, and $\beta^{-1} : G(P_1) \longrightarrow G(P_3)$ are the corresponding Kim-maps.
\end{theorem}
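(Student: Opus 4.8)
The plan is to verify the three claims in turn, leaning on the explicit descriptions of $G(P_i)$ from Theorem \ref{gap1}, the dimension count of Theorem \ref{dim2}, and Homma's rigidity result Lemma \ref{lemma-homma}. First I would check that $\beta$ is a well-defined bijection of the set $\Delta := \{ (i-1)(n-1)+j : 1 \leq i \leq j \leq n-1 \}$ onto itself. Writing $a = (i-1)(n-1)+j$, the map sends $(i,j)$ to $(i',j') = (n-j, n+i-j-1)$; one checks $1 \leq i' \leq j' \leq n-1$ is equivalent to $1 \leq i \leq j \leq n-1$, so $\beta$ maps $\Delta$ to $\Delta$. Since each element of $\Delta$ has a unique representation $(i-1)(n-1)+j$ with $1 \leq i \leq j \leq n-1$ (because $j$ ranges over $i,\dots,n-1$, the blocks are disjoint), the assignment $(i,j) \mapsto (i',j')$ is a bijection of index pairs, hence $\beta$ is a bijection of $\Delta$. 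I would also record that $\beta$ has order $3$ on index pairs, which gives $\beta^{-1} = \beta^2$ and explains why $G(P_3) \to G(P_1)$ is handled by $\beta^{-1}$.

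Next, the core step: showing $\beta$ is the Kim-map $G(P_1) \to G(P_2)$. By the definition preceding Lemma \ref{lemma-homma}, the Kim-map sends a gap $a$ at $P_1$ to $\min\{ t : (a,t) \in H(P_1,P_2)\}$, and by Lemma \ref{lemma-homma} it suffices to exhibit \emph{some} permutation-indexed family $\{(n_i, m_{\sigma(i)})\}$ of $g$ points lying in $(G(P_1) \times G(P_2)) \cap H(P_1,P_2)$; that family must then be $\Gamma(P_1,P_2)$. So for each $a = (i-1)(n-1)+j \in G(P_1)$ I would produce a rational function with pole divisor exactly $a P_1 + \beta(a) P_2$ (no pole at $P_3$), using the generators from \eqref{xy}: a monomial $x^u y^v$ has $\mathrm{div}(x^u y^v) = -(un + v(n-1))P_1 + \dots$ and the exponents can be chosen from the combinatorics of Theorem \ref{gap1} and \ref{basis} so that the $P_1$-pole order is $a$ and the $P_3$-order is nonnegative; the $P_2$-pole order then comes out to $\beta(a)$ by the degree-zero condition on $\mathrm{div}(x^u y^v)$. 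Equivalently — and this is the cleaner route — I would invoke Theorem \ref{dim2}: for $m = \beta(a)$ written as $d(n-1)+r$, the jump from $\ell(mP_2)$ to $\ell(mP_2 - dP_1)$ being exactly $d$ forces, via the standard gap-counting argument, that $(a, \beta(a))$ lies on the graph of the Kim-map for the appropriate $a$; running this over all $d$ and all admissible $r$ sweeps out $g$ distinct points forming a permutation family, so Lemma \ref{lemma-homma} closes the argument.

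Finally, the statements for $(P_2, P_3)$ and for $\beta^{-1} : G(P_3) \to G(P_1)$ follow by the cyclic symmetry of the curve: the defining equation $XY^n + YZ^n + ZX^n + XYZ\,G = 0$ together with the divisor relations \eqref{xy} and \eqref{2808} are invariant (up to relabeling $G$) under the cyclic permutation $P_1 \to P_2 \to P_3 \to P_1$, which is realized by a coordinate permutation on $\mathbb{P}^2$. Hence the Kim-map $G(P_2) \to G(P_3)$ is obtained from the Kim-map $G(P_1) \to G(P_2)$ by transporting along this symmetry, and since $\beta$ was shown to be that map and $\beta$ commutes with the relabeling (it is literally the same formula on index pairs), the Kim-map $G(P_2) \to G(P_3)$ is again $\beta$; one more application gives $G(P_3) \to G(P_1)$ as $\beta$, i.e. $G(P_1) \to G(P_3)$ as $\beta^{-1}$. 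The main obstacle I anticipate is the middle step — nailing down, for every admissible $(i,j)$, the exact pole orders of the explicit functions at all three points simultaneously and confirming the $P_3$-contribution is genuinely $\geq 0$ (so the function actually witnesses membership in $H(P_1,P_2)$ and gives the \emph{minimal} $P_2$-order); organizing this as blocks indexed by $d = $ (number of $n$'s in the expansion) and invoking Theorem \ref{dim2} block-by-block, rather than element-by-element, is how I would keep the bookkeeping under control.
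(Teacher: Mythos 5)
Your overall strategy for the main step coincides with the paper's: check that $\beta$ permutes the gap set, exhibit for each gap $a$ a function whose pole divisor is exactly $aP_1+\beta(a)P_2$ (and the analogues for the other ordered pairs), and invoke Lemma \ref{lemma-homma} to conclude that this permutation family must be the graph of the Kim-map. Your bijectivity check and the observation that the index map $(i,j)\mapsto(n-j,\,n+i-j-1)$ has order three (so $\beta^{-1}=\beta^{2}$, matching the paper's inverse formula $(i-1)(n-1)+j\mapsto(j-i)(n-1)+n-i$) are correct, and you rightly note that Lemma \ref{lemma-homma} spares you from having to verify minimality of the $P_2$-pole order. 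The paper, however, actually writes down the witnesses --- $y^{n-j+i-1}/x^{n-j-1}$ for $(P_1,P_2)$, $1/(x^{i}y^{n-j-1})$ for $(P_2,P_3)$, and $x^{j}/y^{j+1-i}$ for $(P_1,P_3)$ --- and reads off their pole divisors from \eqref{xy}; your sketch defers exactly this choice of exponents to ``the combinatorics of Theorem \ref{gap1} and \ref{basis},'' which is the entire computational content of the theorem. Such exponents do exist, so the plan goes through, but as written the central verification is missing.

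Two of your shortcuts need repair. First, the ``cleaner route'' via Theorem \ref{dim2} does not work as stated: a jump in the filtration $\cL(mP_2)\supset\cL(mP_2-P_1)\supset\cdots\supset\cL(mP_2-dP_1)$ produces functions with prescribed \emph{zero} order at $P_1$ and pole bounded by $mP_2$, i.e.\ information about the order sequence of the linear series $|mP_2|$ at $P_1$, not membership of any pair in $H(P_1,P_2)$; converting that into the Kim-map would need a separate Riemann--Roch duality argument you have not supplied. Second, the cyclic substitution $X\to Y\to Z\to X$ does \emph{not} fix $\xx$ --- it replaces $G(X,Y,Z)$ by a cyclic permutation of its variables, so in general $\xx$ has no automorphism permuting $P_1,P_2,P_3$. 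The symmetry argument can be salvaged (the transformed curve is again of the normalized form, and the $(P_1,P_2)$ statement, once proved uniformly in $G$, transports along the coordinate change to give the other two Kim-maps), and this is a genuine alternative to the paper's choice of simply exhibiting two more explicit families; but ``invariant up to relabeling $G$'' conflates an automorphism of $\xx$ with an isomorphism onto a different member of the family, and the proof should say which of the two it is using.
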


\begin{proof}
It is clear that $\beta$ is a bijection with inverse given by $(i-1)(n-1)+j \longmapsto (j-i)(n-1) + n-i$. Now one can easily check that
\begin{enumerate}[\rm(1)]
\item $\mbox{div}_{\infty} \left( \frac{y^{n-j+i-1}}{x^{n-j-1}} \right) =  ((i-1)(n-1)+j) P_1 + ((n-j-1)(n-1) + n+i-j-1) P_2$
\item $\mbox{div}_{\infty} \left( \frac{1}{x^{i}y^{n-j-1}} \right) =  ((i-1)(n-1)+j) P_2 + ((n-j-1)(n-1) + n+i-j-1) P_3$
\item $\mbox{div}_{\infty} \left( \frac{x^j}{y^{j+1-i}} \right) =  ((i-1)(n-1)+j) P_1 + ((j-i)(n-1) + n-i) P_3$.
\end{enumerate}
Then Lemma \ref{lemma-homma} finishes the proof.
\end{proof}

\section{Pure gaps of $\xx$ at $(P_1,P_2)$} \label{sec4}

\begin{lemma}\label{P1}
The number of pure gaps at $(P_1, P_2)$ is $\frac{1}{3}(g-1)g.$
\end{lemma}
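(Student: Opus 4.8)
The plan is to apply Theorem \ref{homma-kim}, which expresses $\#G_0(P_1,P_2)$ as $\#R(\gs)$, where $\gs$ is the permutation of $\N_{\leq g}$ induced by the Kim-map $\beta : G(P_1) \to G(P_2)$. By Theorem \ref{kim-map} we have an explicit formula for $\beta$ on the set $G(P_1) = \{(i-1)(n-1)+j : 1 \leq i \leq j \leq n-1\}$, so the first step is to understand the ordering on $G(P_1)$ and to translate $\beta$ into the language of the index permutation $\gs$. Writing $g = n(n-1)/2$, the gaps are naturally indexed by pairs $(i,j)$ with $1 \leq i \leq j \leq n-1$, and the natural order on the integer $(i-1)(n-1)+j$ corresponds to the lexicographic-type order on these pairs (first by $i$, then by $j$, since $1 \leq j \leq n-1$ keeps $j$ within one block). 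The image $\beta((i,j))$ corresponds to the pair $(n-j-1+1, \text{something})$; more precisely the output $(n-j-1)(n-1)+(n+i-j-1)$ corresponds to the pair $(i',j') = (n-j, i + n - j - 1)$ after checking it lies in the admissible range $1 \leq i' \leq j' \leq n-1$.

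The core computation is then purely combinatorial: count the number of inversions of $\gs$, i.e. the number of pairs of gaps $\gamma_1 < \gamma_2$ in $G(P_1)$ whose $\beta$-images satisfy $\beta(\gamma_1) > \beta(\gamma_2)$. Concretely, I would count pairs $(i_1,j_1)$, $(i_2,j_2)$ with $1 \leq i_k \leq j_k \leq n-1$ such that $(i_1-1)(n-1)+j_1 < (i_2-1)(n-1)+j_2$ but $(n-j_1-1)(n-1)+(n+i_1-j_1-1) > (n-j_2-1)(n-1)+(n+i_2-j_2-1)$. The second inequality, after simplification, should reduce to a clean condition on $j_1$ versus $j_2$ (the leading term $(n-j-1)(n-1)$ dominates, so roughly $j_1 > j_2$, with a tie-breaking subcase when $j_1 = j_2$ that then compares $i_1$ with $i_2$). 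Combining with the first inequality, the set $R(\gs)$ should decompose into manageable subcases that can be summed by hand; I expect the answer to collapse to a sum like $\sum$ over $d$ of triangular-number contributions, matching the parametrization $(a,b) = ((i-1)n+r+1, (j-1)n+i+s)$ with $i+j = d \leq n-1$ announced in Theorem \ref{p1p2}. The count $\frac{1}{3}(g-1)g$ is then verified by the identity $\sum_{d=1}^{n-1}\binom{d}{2}\cdots = \frac{1}{3}\binom{n(n-1)/2}{1}(\cdot)$; more cleanly, one checks $\frac{1}{3}(g-1)g$ with $g = n(n-1)/2$ against whatever closed form the direct count produces.

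Alternatively — and this may be the cleaner route to write up — instead of counting inversions abstractly I would directly verify that the explicit list of pairs $(a,b)$ in the statement of Theorem \ref{p1p2} is exactly $G_0(P_1,P_2)$ using Theorem \ref{thm_faces}, via the dimension criterion $\ell(aP_1+bP_2) = \ell((a-1)P_1+(b-1)P_2)$, and then just count that list. The dimension computations would lean on an analogue of Theorem \ref{basis} and Theorem \ref{dim2} for two-point divisors $aP_1+bP_2$, exhibiting explicit monomial bases in $x,y$ whose pole orders at $P_1$ and $P_2$ are controlled by \eqref{xy}; the pure-gap condition amounts to showing no basis monomial has pole divisor exactly $aP_1 + bP_2$ with both coordinates saturated. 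Once the set is pinned down, the cardinality $\frac{1}{3}(g-1)g$ follows from the combinatorial identity
\begin{equation}\label{countid}
\sum_{\substack{i,j \geq 1 \\ i+j=d \leq n-1}} (n-d)(n-d+1) \cdot 1 = \frac{1}{3}(g-1)g, \qquad g = \frac{n(n-1)}{2},
\end{equation}
where for each admissible $d$ there are $d-1$ choices of $(i,j)$, $n-d$ choices of $r$, and $n-d+1$ choices of $s$, so the total is $\sum_{d=2}^{n-1}(d-1)(n-d)(n-d+1)$, which I would evaluate by standard summation (reindexing $e = n-d$) to get $\sum_{e=1}^{n-2} e(e+1)(n-1-e)$ and check it equals $\frac{1}{3}g(g-1)$.

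The main obstacle I anticipate is bookkeeping: correctly handling the boundary cases $i=j$ (where the two gaps coincide as pairs but the inversion condition still needs care), making sure the translation between the integer order on $G(P_1)$ and the pair order $(i,j)$ is exactly lexicographic with no off-by-one shift at block boundaries, and verifying that the announced parametrization's ranges for $r$ and $s$ (namely $r \in \{0,\ldots,n-d-1\}$ and $s \in \{0,\ldots,n-d\}$, which are asymmetric) are reproduced correctly — this asymmetry is the subtle point and must come out of the asymmetry between the roles of $P_1$ and $P_2$ in \eqref{xy} and in the Kim-map formula of Theorem \ref{kim-map}. Everything else is routine algebra with the relations $\mathrm{div}(x)$ and $\mathrm{div}(y)$.
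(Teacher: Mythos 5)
Your primary route---applying Theorem \ref{homma-kim} and counting the inversions of the permutation induced by the explicit Kim-map of Theorem \ref{kim-map}---is exactly the paper's proof, and the paper confirms your expectation that the two inequalities collapse to the clean condition $i_1 \leq i_2$ and $j_1 < j_2$, after which the double sum evaluates to $\frac{1}{12}(n-2)(n-1)n(n+1) = \frac{1}{3}(g-1)g$. (Be aware that your alternative route would be circular as written: the paper shows that the explicit list in Theorem \ref{p1p2} exhausts $G_0(P_1,P_2)$ precisely by comparing its cardinality with the count furnished by this lemma, proving only the forward containment via dimension computations.)
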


\begin{proof}
Theorem \ref{homma-kim} gives $\# G_0(P_1, P_2) = \# R(\gs)$. Therefore, from Theorem \ref{kim-map}, it suffices to fix $i$ and $j$ with $1 \leq i \leq j \leq n-1$, and count the number of pairs $(I,J)$ with $1 \leq I \leq J \leq n-1$ for which
$$
(i-1)(n-1) + j < (I-1)(n-1) + J
$$
and
$$
(n-j-1)(n-1) + n+i-j-1 > (n-J-1)(n-1) + n+I-J-1.
$$
That is, $(i-I)(n-1) < J-j$ and $(J-j)n > I-i$. One can easily check that these  conditions are equivalent to $i \leq I$  and  $j < J$. Since 
$$
\# \{ (I,J) \colon i \leq I, \; j < J, \; 1 \leq I \leq J \leq n \} = \frac{1}{2}(n-j-1)(j-2i+n+2),
$$
it follows that
$$
\# G_0(P_1, P_2) = \sum_{i=1}^{n-1} \sum_{j=i}^{n-1} \frac{1}{2}(n-j-1)(j-2i+n+2) = \frac{1}{12} (n-2)(n-1)n(n+1)=\frac{1}{3}(g-1)g.
$$

\end{proof}

\begin{corollary}\label{Pp2}
The number of gaps at $(P_1, P_2)$ is $g(g+1)$.
\end{corollary}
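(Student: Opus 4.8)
The plan is to reduce the statement to Lemma \ref{P1} by means of a general identity, valid on any curve, which relates the number of two-point gaps to the one-point gap sets and the number of pure gaps:
$$
\#G(P_1,P_2)\;=\;\sum_{a\in G(P_1)}a\;+\;\sum_{b\in G(P_2)}b\;-\;\#G_0(P_1,P_2).
$$
Granting this, the corollary follows by a short computation. By Theorem \ref{gap1} the two gap sets coincide, so the two sums are equal; summing over $1\leq i\leq j\leq n-1$ gives $\sum_{a\in G(P_1)}a=\frac16 n(n-1)(n^2-n+1)=\frac13 g(2g+1)$, using $g=n(n-1)/2$ and $n^2-n+1=2g+1$. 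Together with $\#G_0(P_1,P_2)=\frac13(g-1)g$ from Lemma \ref{P1}, this yields $\#G(P_1,P_2)=\frac13\big(2g(2g+1)-g(g-1)\big)=\frac13 g(3g+3)=g(g+1)$.

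To prove the identity, I would argue as follows. For $a,b\geq 1$, one has $(a,b)\notin H(P_1,P_2)$ exactly when $\ell(aP_1+bP_2)=\ell((a-1)P_1+bP_2)$ or $\ell(aP_1+bP_2)=\ell(aP_1+(b-1)P_2)$; this is the standard translation of the semigroup condition into Riemann--Roch dimensions, using only that consecutive dimensions differ by at most $1$ together with the valuation identity $v_P(f+\lambda g)=v_P(f)$ for generic $\lambda$ when $v_P(f)\leq v_P(g)$. On the boundary rows and columns ($a=0$ or $b=0$), membership in $H(P_1,P_2)$ is governed by $H(P_1)$ and $H(P_2)$. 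Declare $u(a,b)=1$ if $a\geq 1$ and $\ell(aP_1+bP_2)=\ell((a-1)P_1+bP_2)$, and $v(a,b)=1$ if $b\geq 1$ and $\ell(aP_1+bP_2)=\ell(aP_1+(b-1)P_2)$. The discussion above gives $(a,b)\notin H(P_1,P_2)\iff u(a,b)=1\ \text{or}\ v(a,b)=1$; and, since $\nabla^2_1(a,b)=\emptyset$ is equivalent to the first equality and $\nabla^2_2(a,b)=\emptyset$ to the second, Theorem \ref{thm_faces} shows that $u(a,b)=v(a,b)=1$ holds precisely for $(a,b)\in G_0(P_1,P_2)$ (which forces $a,b\geq 1$). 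Inclusion--exclusion then gives $\#G(P_1,P_2)=\sum_{(a,b)}u(a,b)+\sum_{(a,b)}v(a,b)-\#G_0(P_1,P_2)$.

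It remains to evaluate $\sum_{(a,b)}u(a,b)$; the term $\sum_{(a,b)}v(a,b)$ is handled symmetrically. For fixed $b\geq 0$, the function $a\mapsto\ell(aP_1+bP_2)$ starts at $\ell(bP_2)$, increases by $0$ or $1$ at each step, and equals $a+b+1-g$ once $a+b>2g-1$; counting its flat steps gives $\sum_{a\geq 1}u(a,b)=g-b-1+\ell(bP_2)=\#\{k\in G(P_2):k>b\}$, where the last equality uses $\ell(bP_2)=(b+1)-\#\{k\in G(P_2):k\leq b\}$ and $\#G(P_2)=g$. Summing over $b\geq 0$ gives $\sum_{(a,b)}u(a,b)=\sum_{b\geq 0}\#\{k\in G(P_2):k>b\}=\sum_{k\in G(P_2)}k$, and likewise $\sum_{(a,b)}v(a,b)=\sum_{k\in G(P_1)}k$, which establishes the identity.

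The technical heart is the identity itself; within it, the delicate points are the bookkeeping on the boundary rows and columns in the characterization of $H(P_1,P_2)$ and the matching of the ``doubly flat'' positions with $G_0(P_1,P_2)$, which rests on Theorem \ref{thm_faces} (equivalently, on the inequality $\ell(D_1\vee D_2)+\ell(D_1\wedge D_2)\geq\ell(D_1)+\ell(D_2)$). Once the identity is in hand, the final count $g(g+1)$ is a routine consequence of Theorem \ref{gap1} and Lemma \ref{P1}.
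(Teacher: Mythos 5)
Your proof is correct and follows the same route as the paper: both derive the count from the identity $\#G(P_1,P_2)+\#G_0(P_1,P_2)=\sum_{a\in G(P_1)}a+\sum_{b\in G(P_2)}b$, combined with the gap sequence from Theorem~\ref{gap1} and the pure-gap count from Lemma~\ref{P1}. The only difference is that the paper simply cites this identity as \cite[Theorem 1]{homma96}, whereas you supply a correct self-contained proof of it via inclusion--exclusion on the two ``flatness'' indicators $u$ and $v$, which is a nice addition but not a different strategy.
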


\begin{proof}
Note that Theorem \ref{gap1} yields
$$
\sum\limits_{a \in G(P_1)}a = \sum\limits_{b \in G(P_2)}b = \sum\limits_{i=1}^{n-1} \sum\limits_{j=i}^{n-1} ((i-1)(n-1)+j) = \frac{1}{6}n(n-1)(n^2-n+1).
$$
Thus by \cite[Theorem 1]{homma96},
$$
\# G(P_1, P_2) + \# G_0(P_1, P_2) = \sum_{a \in G(P_1)}a + \sum_{b \in G(P_2)}b = \frac{1}{3}n(n-1)(n^2-n+1).
$$
Lemma \ref{P1} gives $\# G_0(P_1, P_2) = \frac{1}{12} (n-2)(n-1)n(n+1)$, and then
$$
\# G(P_1, P_2) = \frac{1}{4}n(n-1)(n^2-n+2)=g(g+1).
$$
\end{proof}

\begin{theorem} \label{Md}
For integers $i,j \geq 1$ with $2 \leq i+j=d \leq n-1$, let $M_d = (in-d)P_1 + j(n-1)P_2$ and $N_d = (i-1)nP_1 + ((j-1)n+i-1)P_2$. Then
$\ell(M_d) = \ell(N_d) = \frac{1}{2}(d-1)(d-2) + i$. 
\end{theorem}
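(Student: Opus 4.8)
The plan is to compute both dimensions by Serre duality together with the Riemann–Roch theorem. Since $\xx$ is a smooth plane curve of degree $n+1$, its canonical class is cut out by plane curves of degree $n-2$; in particular, taking $K$ to be $(n-2)$ times the divisor that the line $Z=0$ cuts out on $\xx$, which by \eqref{axes} is $nP_1+P_2$, we get a canonical divisor $K\sim(n-2)(nP_1+P_2)$. A direct count gives $\deg M_d=d(n-1)-j$ and $\deg N_d=(d-2)n+i-1$, so by Riemann–Roch, using $g=\frac12 n(n-1)$ and $i+j=d$, the assertion $\ell(M_d)=\ell(N_d)=\frac12(d-1)(d-2)+i$ is equivalent to the two identities
$$\ell(K-M_d)=\binom{n-d}{2},\qquad \ell(K-N_d)=\binom{n-d+2}{2},$$
and these are what I would actually prove; the passage back to $\frac12(d-1)(d-2)+i$ is a one-line calculation.

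Using \eqref{xy} to reduce, one finds $K-M_d\sim AP_1-BP_2$ with $A=(n-1-i)(n-1)+(j-1)$ and $B=(j-1)(n-1)+1\ge 1$, so that $\cL(K-M_d)=\{f\in\cL(AP_1):v_{P_2}(f)\ge B\}$. One checks $1\le A\le 2g-2$, so Theorem \ref{basis} gives $\cL(AP_1)$ an explicit basis $\mathcal B$ consisting of $1$ together with monomials $x^uy^v$ whose $y$-degree is at most $n-2$. The crucial point is that the members of $\mathcal B$ have pairwise distinct $P_2$-valuations: $v_{P_2}(x^uy^v)=u(n-1)-v$, and two such values agree only if $(u-u')(n-1)=v-v'$, which is impossible for $0<|v-v'|\le n-2<n-1$. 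Hence for any nonzero $f=\sum_{m\in\mathcal B}c_m m$ the strict triangle inequality gives $v_{P_2}(f)=\min\{v_{P_2}(m):c_m\ne 0\}$, so $f$ lies in $\cL(K-M_d)$ precisely when every monomial occurring in $f$ already does. Therefore $\ell(K-M_d)$ is the number of $x^uy^v\in\mathcal B$ with $u(n-1)-v\ge B$; a short count shows that exactly the monomials with $u\ge j$ survive (those with $u<j$ satisfy $u(n-1)-v\le u(n-1)<B$, while for $u\ge j$ the bound $v\le n-2-i-u$ already forces $u(n-1)-v\ge B$) and that they form a triangular array of cardinality $\binom{n-d}{2}$.

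For $N_d$ the same substitution gives $K-N_d\sim n(n-1-i)P_1+\bigl(n-1-i-(j-1)n\bigr)P_2$, and here I would split on $j$. If $j\ge 2$ the coefficient of $P_2$ is negative, so $K-N_d\sim A'P_1-B'P_2$ with $A'=n(n-1-i)$ and $B'=(j-2)n+i+1>0$; again $1\le A'\le 2g-2$, the basis monomials of $\cL(A'P_1)$ have distinct $P_2$-valuations, and counting those with $v_{P_2}\ge B'$ gives $\binom{n-d+2}{2}$ by the same reasoning. If $j=1$, then $N_d=(i-1)(nP_1+P_2)$ is $(i-1)$ times a line section, so $K-N_d\sim(n-1-i)(nP_1+P_2)$; since $n-1-i<n+1$, the space $\cL((n-1-i)(nP_1+P_2))$ is the space of restrictions to $\xx$ of ternary forms of degree $n-1-i$, of dimension $\binom{n-1-i+2}{2}=\binom{n-d+2}{2}$ because $d=i+1$. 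Substituting into Riemann–Roch yields $\ell(M_d)=\ell(N_d)=\frac12(d-1)(d-2)+i$. I expect the only genuine idea needed is the one in the middle paragraph — that tensoring by $K$ turns $M_d$ and $N_d$ into divisors equivalent to a one-point divisor at $P_1$ subject to a vanishing condition at $P_2$, for which Theorem \ref{basis} supplies a monomial basis whose members are separated by their orders at $P_2$; the remaining work is degree bookkeeping, the minor case distinction $j=1$ versus $j\ge 2$ for $N_d$, and the boundary cases $d=n-1$ (where $K-M_d$ is not effective) and $d=2$ (where $N_d=0$).
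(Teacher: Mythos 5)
Your proposal is correct, and for $N_d$ it is essentially the paper's own proof: the paper likewise takes the canonical divisor $W=(n-2)nP_1+(n-2)P_2$ cut out by $Z^{n-2}$, writes $W-N_d=(n-i-1)nP_1-((j-2)n+i+1)P_2$, treats $j=1$ separately as a multiple of the line section $Z=0$, and for $j\ge 2$ selects from the monomial basis of $\cL((n-i-1)nP_1)$ exactly the $x^uy^v$ with $v_{P_2}(x^uy^v)=u(n-1)-v$ at least $(j-2)n+i+1$ (i.e.\ $u\ge j-1$), the point being that these valuations are pairwise distinct so the strict triangle inequality turns membership of a linear combination into membership of each monomial. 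Where you genuinely diverge is the $M_d$ half. The paper does not dualize there: it notes that $\mathrm{div}(x^j/y^d)=M_d-E$ with $E=(d(n-1)+i)P_3-dP_2$, so $M_d\sim E$, and then quotes Theorem \ref{dim2}, which already gives $\ell(E)=\frac12(d-1)(d-2)+i$. You instead run the Serre-duality-plus-monomial-count machine a second time on $K-M_d=\bigl((n-1-i)(n-1)+j-1\bigr)P_1-\bigl((j-1)(n-1)+1\bigr)P_2$, and your count $\ell(K-M_d)=\binom{n-d}{2}$ checks out (the third block of basis elements $x^uy^{d'-u}$ has $u\le j-1$ and is correctly discarded by your ``$u<j$'' case). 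The trade-off is minor: the paper's route for $M_d$ is shorter because Theorem \ref{dim2} has already done the valuation-filtration work on the $P_3$ side, while yours is more uniform and self-contained, handling both divisors by one mechanism; at bottom both rest on the same key device, namely the explicit monomial bases of Theorem \ref{basis} together with the separation of their orders at a second point.
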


\begin{proof}
Observe that \eqref{xy} yields
$$
\mbox{div} \left( \frac{x^j}{y^d} \right) =  (in-d)P_1 + (jn+i)P_2 - (dn-j)P_3 = M_d - E,
$$
where $E = (d(n-1)+i)P_3 - dP_2$. By Theorem \ref{dim2}, $\ell(M_d) = \ell(E) = \frac{1}{2}(d-1)(d-2) + i$.
\\
Now since $\xx \subset \mathbb{P}^2$ is a smooth curve of degree $n+1$, the divisor cut out by $Z^{n-2}=0$, namely $W := (n-2)nP_1 + (n-2)P_2$, is a canonical divisor. Thus Riemann-Roch  theorem gives
\begin{equation} \label{RRT}
\ell(N_d) = \deg(N_d) +1 -g + \ell(W-N_d).
\end{equation}
It is  claimed that $\ell(W-N_d) = \frac{1}{2}(n-d+1)(n-d+2)$. Note that $W-N_d = (n-i-1)nP_1 - ((j-2)n+i+1)P_2$ and for $j=1$, the divisor $W-N_d = (n-i-1)nP_1 + (n-i-1)P_2$ is cut out on $\xx$ by  $Z^{n-i-1}=0$. Hence, $\ell(W-N_d) = \frac{1}{2}(n-d+1)(n-d+2)$. Now suppose $j \geq 2$. In this case, $\cL(W-N_d) \subset \cL((n-i-1)nP_1)$. By Theorem \ref{basis}, $\ell((n-i-1)nP_1) = \frac{1}{2}(n-i-1)(n-i) + 1$ and
$$
B := \{1\} \cup \{x^uy^v : 1 \leq u \leq n-i-1 \text{ and } 0 \leq v \leq n-i-1-u \}
$$
is a basis for $\cL((n-i-1)nP_1)$. Note that $f \in \cL(W-N_d)$ if and only if $f \in \cL((n-i-1)nP_1)$ and $v_{P_2}(f) \geq (j-2)n+i+1$. Since $v_{P_2}(x^uy^v) = u(n-1) -v$,   $v_{P_2}(x^uy^v) \geq (j-2)n+i+1$ if and only if $u \geq j-1$. Hence
$$
B' := \{x^uy^v : j-1 \leq u \leq n-i-1 \text{ and } 0 \leq v \leq n-i-1-u \}
$$
is a basis for $\cL(W-N_d)$. Therefore, $\ell(W-N_d) = \frac{1}{2}(n-d+1)(n-d+2)$, and then \eqref{RRT} gives $\ell(N_d) = \frac{1}{2}(d-1)(d-2) + i$.
\end{proof}

\begin{proof}[{\bf Proof of Theorem \ref{p1p2}}]
The assertion $\#G_0(P_1,P_2) = \frac{1}{3}(g-1)g$ is already proved in Lemma \ref{P1}.
It follows from Theorem \ref{Md} that the set $S$ given by
$$
\Big\{ (a,b) = \Big((i-1)n+r+1,(j-1)n+i+s \Big) : 2 \leq i+j=d \leq n-1, \ \ r \in \{0,\ldots, n-d-1\}, \ s \in \{0,\ldots, n-d\} \Big\}
$$
is a subset of $G_0(P_1,P_2)$, and the divisors $D=aP_1+bP_2$ and $E = (a-1)P_1+(b-1)P_2$ 
have dimension $\frac{1}{2}(d-1)(d-2) + i$. Note that $S$ is a disjoint union of the sets
$$
S_{ij} = 
\left\{
(a,b) \in \N_0^2 \; \biggr |
\begin{array}{rcccl}
(i-1)n+1        & \hspace{-0.2cm} \leq & \hspace{-0.2cm} a & \hspace{-0.2cm} \leq & \hspace{-0.2cm} in-d  \\
(j-1)n+i & \hspace{-0.2cm} \leq & \hspace{-0.2cm} b & \hspace{-0.2cm} \leq & \hspace{-0.2cm} jn-j
\end{array}
\right\}
$$
with $i,j \geq 1$ and $2 \leq i+j=d \leq n-1$. Since each $S_{ij}$ has $(d-n)(d-n-1)$ elements,
$$
\# S = \# \bigcup_{2 \leq i+j \leq n-1} S_{ij} = \sum_{d=2}^{n-1} (d-1)(d-n)(d-n-1) = \frac{1}{12} (n-2)(n-1)n(n+1) = \frac{1}{3}(g-1)g,
$$
which is the same number of elements in $G_0(P_1,P_2)$. This concludes the proof for $(P_1,P_2)$. By the description of the Kim-maps in Theorem \ref{kim-map}, the analogous proofs for the ordered pairs $(P_2,P_3)$ and $(P_3,P_1)$ apply.
\end{proof}

\section{Pure gaps of $\xx$ at $(P_1,P_2,P_3)$} \label{sec5}

\indent

To characterize the set of pure gaps of $\xx$ at $(P_1,P_2,P_3)$, let us  begin with the following lemmas.

\begin{lemma}\label{dim1} For $i,j,k \in \Z$, and $d:=i+j+k$, the divisor 
$$S_d=(  kn+j ) P_1+(in+k) P_2+(jn+i) P_3$$
is such that $\cL(S_d)$ has dimension
\begin{equation*}
\ell(S_d)=
\begin{cases}
0, & \text{ if } d < 0 \\
\frac{1}{2}(d+2)(d+1), & \text{ if } 0\leq d \leq n-2 \\
(n+1)d-g+1, & \text{ if  } d \geq n-1,
\end{cases}
\end{equation*}
where $g$ is the genus of $\xx$.
\end{lemma}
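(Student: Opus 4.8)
The plan is to identify $S_d$, up to linear equivalence, with a multiple of a line section of the plane curve $\xx$. Writing $k=d-i-j$ and using the principal divisors in \eqref{xy}, one verifies directly the divisor identity
$$
S_d \;=\; d\,(nP_1+P_2)\;+\;\mbox{div}(x^iy^j),
$$
which is valid for all $i,j,k\in\Z$, since $x^iy^j$ is a rational function on $\xx$ regardless of the signs of $i$ and $j$. In particular $\deg S_d=(n+1)d$ and, because $nP_1+P_2$ is the divisor cut out on $\xx$ by the line $Z=0$, we get $S_d\sim d(nP_1+P_2)$ and hence $\ell(S_d)=\ell\big(dnP_1+dP_2\big)$. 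All three cases will be read off from this.

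The extremal cases are short. If $d<0$ then $\deg S_d<0$, so $\ell(S_d)=0$. If $d\ge n-1$ then $\deg S_d=(n+1)d\ge (n+1)(n-1)>n^2-n-2=2g-2$, so $S_d$ is nonspecial and the Riemann--Roch theorem gives $\ell(S_d)=\deg S_d+1-g=(n+1)d+1-g$; equivalently, the canonical divisor $W=(n-2)nP_1+(n-2)P_2$ from the proof of Theorem \ref{Md} satisfies $\deg(W-S_d)<0$.

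The main case is $0\le d\le n-2$, where I claim $\ell\big(dnP_1+dP_2\big)=\tfrac12(d+1)(d+2)$; the case $d=0$ is trivial, so assume $1\le d\le n-2$. Here $dn=d(n-1)+d$ with $0\le d\le n-2$ and $1\le dn\le 2g-2$, so Theorem \ref{basis} applies and gives $\ell(dnP_1)=\tfrac12(d^2-d+2)+d=\tfrac12(d^2+d+2)$. I would then climb the chain
$$
\cL(dnP_1)\subseteq\cL(dnP_1+P_2)\subseteq\cdots\subseteq\cL(dnP_1+dP_2),
$$
whose consecutive inclusions have codimension at most $1$. For each $e\in\{1,\dots,d\}$ the function $y^e$ lies in $\cL(dnP_1+eP_2)$ but not in $\cL(dnP_1+(e-1)P_2)$: by \eqref{xy} its only poles are at $P_1,P_2,P_3$, and $v_{P_1}(y^e)=-(n-1)e\ge -(n-1)d\ge -dn$, $v_{P_3}(y^e)=ne\ge 0$, while $v_{P_2}(y^e)=-e$. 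Hence all $d$ inclusions are strict, so $\ell(dnP_1+dP_2)=\ell(dnP_1)+d=\tfrac12(d^2+d+2)+d=\tfrac12(d+1)(d+2)$, finishing this case and the proof. The one genuine constraint is the hypothesis $dn\le 2g-2$ needed to invoke Theorem \ref{basis} for $\ell(dnP_1)$: this is exactly the condition $d\le n-2$, and it is what forces the formula to change shape at $d=n-1$. Everything else is routine bookkeeping with \eqref{xy} and the elementary inequality $\ell(D+P)\le \ell(D)+1$.
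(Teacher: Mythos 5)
Your proof is correct and follows essentially the same route as the paper: the key step in both is the identity $S_d = d(nP_1+P_2) + \mathrm{div}(x^iy^j)$, which reduces everything to computing $\ell\bigl(d(nP_1+P_2)\bigr)$ for a line section. The only difference is that for $0\le d\le n-2$ the paper simply invokes the standard fact about complete linear systems cut out by degree-$d$ curves on a smooth plane curve, whereas you derive $\ell(dnP_1+dP_2)=\tfrac12(d+1)(d+2)$ explicitly from Theorem \ref{basis} and a chain argument with the functions $y^e$ — a harmless, more self-contained elaboration of the same idea.
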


\begin{proof}
Note that $\mbox{div}(x^iy^j)=-(in+jn-j)P_1 + (in-i-j)P_2 + (jn+i)P_3$, and then
\begin{equation}\label{eq1}
S_d= \mbox{div}(x^iy^j)+ d (n P_1+  P_2)
\end{equation}
gives $S_d\sim d (n P_1+  P_2)$. Thus result  follows from the fact that $n P_1+ P_2$ is the divisor cut out on $\mathcal{X}$ by the line $Z=0$.
\end{proof}

\begin{lemma}\label{lema2} Notation as in Lemma \ref{dim1}. If $0\leq d \leq n-2$ and
$d+e=n-2$, then
\begin{equation*}
\ell(S_d+e(P_1+P_2+P_3))=\frac{1}{2}(d+2)(d+1).
\end{equation*}
\end{lemma}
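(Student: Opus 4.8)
I want to show $\ell(S_d + e(P_1+P_2+P_3)) = \tfrac12(d+2)(d+1)$ when $0\le d\le n-2$ and $d+e=n-2$. The lower bound is immediate from Lemma \ref{dim1}: since $S_d + e(P_1+P_2+P_3) \ge S_d$, we get $\ell(S_d+e(P_1+P_2+P_3)) \ge \ell(S_d) = \tfrac12(d+2)(d+1)$. So the real content is the upper bound, i.e.\ adding the $e$-fold diagonal $P_1+P_2+P_3$ does not increase the dimension. The natural tool is Riemann--Roch together with the canonical divisor $W = (n-2)nP_1 + (n-2)P_2$ identified in the proof of Theorem \ref{Md} (cut out by $Z^{n-2}=0$).

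First I would set $A := S_d + e(P_1+P_2+P_3)$ and compute $\deg A = d(n+1) + 3e$ using $\deg S_d = d(n+1)$ (clear from $S_d \sim d(nP_1+P_2)$ in Lemma \ref{dim1}) and $d+e = n-2$. Then Riemann--Roch gives $\ell(A) = \deg A + 1 - g + \ell(W - A)$, so it suffices to compute $\ell(W-A)$ and check the arithmetic collapses to $\tfrac12(d+2)(d+1)$. Using \eqref{eq1}, namely $S_d = \mathrm{div}(x^iy^j) + d(nP_1+P_2)$, I can write $W - A \sim W - d(nP_1+P_2) - e(P_1+P_2+P_3)$ (after translating by the principal divisor $\mathrm{div}(x^iy^j)$); and since $W$ is cut out by $Z^{n-2}$ while $nP_1+P_2$ is cut out by $Z$, we have $W - d(nP_1+P_2) \sim e(nP_1+P_2)$, so $W - A \sim e(nP_1 + P_2) - e(P_1+P_2+P_3) = e\bigl((n-1)P_1 - P_3\bigr)$. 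Now I recognize this: by \eqref{xy}, $\mathrm{div}(y) = -(n-1)P_1 - P_2 + nP_3$, so $e(n-1)P_1 - eP_3 \sim$ a divisor I should be able to relate to an explicit $\cL(\cdot)$ computation, or more cleanly, $W - A$ is linearly equivalent to $-eP_2 + e(n-1)P_3 - \cdots$; I would instead directly bound $\ell(W-A)$ by comparing with a one-point space $\cL(m P_k)$ via Theorems \ref{basis}/\ref{dim2}, exactly in the style of the proof of Theorem \ref{Md}: write $W-A$ (or a representative) inside some $\cL(mP_1)$, impose the vanishing conditions coming from the other two points on the monomial basis $\{1\}\cup\{x^uy^v\}$, and count the survivors.

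The key step — and the main obstacle — is pinning down $\ell(W-A)$ exactly. The symmetric shape of $A$ (it involves all three points through the $e(P_1+P_2+P_3)$ term) means $W-A$ is not supported on two points, so I cannot quote Theorem \ref{dim2} verbatim; I expect to have to run a basis-and-valuation argument by hand. Concretely, I would pick the representative of $W-A$ with smallest pole order at $P_1$, observe $\cL(W-A)$ embeds in $\cL(mP_1)$ for the appropriate $m = (n-i-e-1)n + (\text{something})$, take the monomial basis $\{x^uy^v : 1\le u \le \cdot,\ 0\le v\le \cdot\}\cup\{1\}$ from Theorem \ref{basis}, and cut it down by the two inequalities $v_{P_2}(x^uy^v) = u(n-1)-v \ge$ (required order) and $v_{P_3}(x^uy^v) = vn+u \ge$ (required order), using the Strict Triangle Inequality for linear independence as in Theorems \ref{dim2} and \ref{Md}. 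Counting the lattice points that satisfy both inequalities should give $\ell(W-A) = \tfrac12(e+1)(e+2)$ or a closely related quadratic in $e$; substituting back into Riemann--Roch, and using $d+e=n-2$, $g = \tfrac12 n(n-1)$, the terms involving $e$ should cancel and leave precisely $\tfrac12(d+2)(d+1)$. If the direct count proves awkward, the fallback is to argue by the opposite inequality: $W - A = (W-A)$ with $W - A \le W - S_d$, and $\ell(W - S_d) = \ell(W) - \deg S_d + \text{(Clifford-type slack)}$ is controlled by Lemma \ref{dim1} applied to the "dual" divisor, so monotonicity pins $\ell(W-A)$ between two equal quantities. Either way, once $\ell(W-A)$ is known the conclusion is a one-line substitution, so essentially all the work is the combinatorial count for $\ell(W-A)$.
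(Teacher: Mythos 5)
Your overall strategy is the paper's own: apply Riemann--Roch to $A := S_d + e(P_1+P_2+P_3)$ against the canonical divisor $W \sim (n-2)(nP_1+P_2) \sim S_{n-2}$, and reduce $W-A$ by linear equivalence to $e\bigl((n-1)P_1 - P_3\bigr)$. You derive that reduction correctly, and $\deg A = d(n+1)+3e$ is right. The gap is in the one step you leave undone. You assert that ``$W-A$ is not supported on two points, so I cannot quote Theorem \ref{dim2} verbatim'' and propose a by-hand lattice count instead --- but after your own reduction the representative $e(n-1)P_1 - eP_3$ \emph{is} supported on two points and is literally of the form $mP_1 - dP_3$ treated in Theorem \ref{dim2}, with $m = e(n-1)$, i.e. $d=e$ and $r=0$ in that theorem's notation. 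So Theorem \ref{dim2} gives $\ell(W-A) = \tfrac12(e-1)(e-2)$ at once; that is exactly how the paper concludes.

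The more substantive problem is that your predicted value $\ell(W-A) = \tfrac12(e+1)(e+2)$ is wrong: the correct value is $\tfrac12(e-1)(e-2)$, which differs from your guess by $3e$. Since Riemann--Roch gives $\ell(A) = d(n+1)+3e+1-g+\ell(W-A)$, substituting your guess yields $\tfrac12(d+1)(d+2)+3e$ rather than the claimed $\tfrac12(d+1)(d+2)$, so the ``one-line substitution'' you defer to would not close with the number you expect. (A sanity check: by Theorem \ref{basis}, $\ell(e(n-1)P_1) = \tfrac12 e(e-1)+1$ already, and subtracting $eP_3$ drops this by a further $e$, leaving $\tfrac12(e-1)(e-2)$; there is no room for $\tfrac12(e+1)(e+2)$ sections.) Once you replace the guess by the value supplied by Theorem \ref{dim2}, the arithmetic does collapse to $\tfrac12(d+1)(d+2)$ and your argument coincides with the paper's.
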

\begin{proof}
Let $i_1,i_2,i_3 \in \Z$ be such that $i_1+i_2+i_3=e$. Since $d+e=n-2$,
it follows that $S_{d+e}$ is a canonical divisor. Thus for $D:=S_d+e(P_1+P_2+P_3)$,

$$S_{d+e}-D=S_e-e(P_1+P_2+P_3)= \mbox{div}(x^{i_1}y^{j_1})+ e ((n-1) P_1- P_3),$$
where the second equality above follows from \eqref{eq1}. Therefore, by Riemann-Roch theorem 
$$\ell(D)=\deg D+1-g+\ell(e ((n-1) P_1-P_3)).$$
From Theorem \ref{dim2},  $\ell(e ((n-1) P_1-P_3))=\frac{1}{2}(e-1)(e-2)$, and then
$$\ell(D)=d(n+1)+3e+1-\frac{1}{2}n(n-1)+\frac{1}{2}(e-1)(e-2) = \frac{1}{2}(d+2)(d+1).$$
\end{proof}

\begin{lemma} \label{GG}
If $(a,b,c) \in G_0(P_1,P_2,P_3)$ then none of $a$, $b$ or $c$ is divisible by $n-1$. 
\end{lemma}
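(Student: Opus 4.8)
The plan is to argue by contradiction: suppose $(a,b,c)\in G_0(P_1,P_2,P_3)$ and that one of the three coordinates is divisible by $n-1$. Since the combinatorial data of the configuration is cyclically symmetric, I would carry out the case $n-1\mid a$ in full and then observe that the cases $n-1\mid b$ and $n-1\mid c$ are settled by the very same argument, with the function $y$ replaced by $1/x$ (whose pole divisor is $(n-1)P_2+P_3$, by \eqref{xy}) and by $x/y$ (whose pole divisor is $P_1+(n-1)P_3$), respectively.

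So suppose $n-1\mid a$. By Theorem \ref{thm_faces} we have $a\in G(P_1)$, hence $a\neq 0$, and Theorem \ref{gap1} then forces $a=\ell(n-1)$ for some integer $\ell$ with $1\le \ell\le n-1$. The only fact about $\xx$ that I would need is that, by \eqref{xy}, the pole divisor of $y$ equals $(n-1)P_1+P_2$; hence $\mbox{div}_\infty(y^{t})=t(n-1)P_1+tP_2$ for every integer $t\ge 0$, so that $\bigl(t(n-1),\,t,\,0\bigr)\in H(P_1,P_2,P_3)$ for all such $t$. First I would apply this with $t=\ell$, where $t(n-1)=a$: if $\ell\le b$, then $(a,\ell,0)\in\nabla^3_1(a,b,c)$, which by Theorem \ref{thm_faces} contradicts $(a,b,c)\in G_0$, so $b\le\ell-1$. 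Next I would apply it with $t=b$: if $b(n-1)\le a$, then $(b(n-1),b,0)\in\nabla^3_2(a,b,c)$, again contradicting $(a,b,c)\in G_0$, so $b(n-1)>a=\ell(n-1)$, i.e. $b\ge\ell+1$. Since $b\le\ell-1$ and $b\ge\ell+1$ cannot both hold, the proof would be complete.

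The hard part is really just spotting the two auxiliary functions and seeing why they work; after that there is little to do. The function $y^{\ell}$ has a pole of order exactly $a$ at $P_1$ and is regular at $P_3$ — it is this regularity at $P_3$ that makes $(a,\ell,0)$ meet the $P_3$-condition defining $\nabla^3_1$ for free — and it forces $b$ to be small; in the language of Section \ref{sec3} this is merely the fact that the Kim-map value $\beta_a$ of a multiple of $n-1$ equals $\ell$ (Theorem \ref{kim-map}), so such a gap lies too low on the graph $\Gamma(P_1,P_2)$ to support a pure gap. The function $y^{b}$ has a pole of order exactly $b$ at $P_2$, is again regular at $P_3$, and forces $b$ to be large. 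The only thing left to check for the coordinates $b$ and $c$ is that $1/x$ and $x/y$ supply, through the identical mechanism, the two auxiliary functions needed there; this uses just the principal divisors in \eqref{xy} together with the fact, from Theorem \ref{gap1}, that the Weierstrass data at $P_1$, $P_2$, $P_3$ coincide.
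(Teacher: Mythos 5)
Your proof is correct and follows essentially the same route as the paper: both arguments hinge on the semigroup elements $(t(n-1),t,0)$, $(0,t(n-1),t)$, $(t,0,t(n-1))$ and play the two instances $t=\ell$ and $t=b$ (resp.\ $t=c$, $t=a$) against each other via Theorem \ref{thm_faces} to force the contradictory inequalities $b<\ell$ and $b>\ell$. The only cosmetic difference is that you exhibit the functions $y^t$, $1/x^t$, $(x/y)^t$ directly from \eqref{xy}, whereas the paper obtains the same semigroup elements from the Kim-map computation $\beta(i(n-1))=i$ in Theorem \ref{kim-map}.
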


\begin{proof}
Suppose that $n-1$ divides $a$. From Theorems \ref{thm_faces} and \ref{gap1}, it follows that $a = t(n-1)$ for some $t \in \{ 1, \dots, n-1 \}$. Note that Theorem \ref{kim-map} gives $\gb(i(n-1)) = i$, for $i=1,\ldots,n-1$, and \eqref{graf-beta} yields
\begin{equation} \label{29-10}
(i(n-1),i,0) \in H(P_1,P_2,P_3) \text{ for } i=1,\ldots,n-1.
\end{equation}
Given that $(t(n-1),b,c) \in G_0(P_1,P_2,P_3)$ and $(t(n-1),t,0) \in H(P_1,P_2,P_3)$, it follows from Theorem \ref{thm_faces} that $b < t$ and $(b(n-1),b,0) \in G(P_1,P_2,P_3)$, which contradicts \eqref{29-10}. Similarly, using that $(0,i(n-1),i),(i,0,i(n-1)) \in H(P_1,P_2,P_3)$ for $i=1,\ldots,n-1$, the same proof applies for coordinates $b$ and $c$.
\end{proof}

\begin{proof}[{\bf Proof of Theorem \ref{cor}}]
It follows immediately from Lemmas \ref{dim1} and \ref{lema2} that $G_0(P_1,P_2,P_3)$ contains the elements
\begin{equation} \label{abc}
(a,b,c)=\Big(kn+j+r+1,in+k+s+1, jn+i+t+1\Big)
\end{equation}
with $i,j,k \geq 0$, $i+j+k=d \leq n-3$ and $r,s,t \in \{0,\ldots, n-3-d\}$. Furthermore, for $(a,b,c)$ and $d$ as above, the Riemann-Roch spaces $\cL(aP_1+bP_2+cP_3)$ and $\cL((a-1)P_1+(b-1)P_2+(c-1)P_3)$ have dimension $\frac{1}{2}(d+1)(d+2)$.
\\

\noindent
Now the fact  that any $(a,b,c) \in G_0(P_1,P_2,P_3)$ can be written as in \eqref{abc} follow. Indeed, from Theorem \ref{kim-map} and \eqref{graf-beta}, 
\begin{enumerate}[\rm(1)]
\item $(a,\gb(a),0), (\gb^{-1}(b),b,0) \in H(P_1,P_2,P_3)$
\item $(0,b,\gb(b)), (0,\gb^{-1}(c),c) \in H(P_1,P_2,P_3)$
\item $(a,0,\gb^{-1}(a)), (\gb(c),0,c) \in H(P_1,P_2,P_3)$.
\end{enumerate}
Thus Theorem \ref{thm_faces} entails
\begin{itemize}
\item[(i)] $b < \gb(a)$ and $a < \gb^{-1}(b)$
\item[(ii)] $c < \gb(b)$ and $b < \gb^{-1}(c)$
\item[(iii)] $a < \gb(c)$ and $c < \gb^{-1}(a)$.
\end{itemize}
By \eqref{gap1} and Lemma \ref{GG}, there exist $i,j,k \in \{ 0, \dots, n-3 \}$ such that
$$
\begin{cases} 
a = kn + a_0 \\ 
b = in + b_0 \\
c = jn + c_0
\end{cases}
\text{ with } \quad
\begin{cases}
1 \leq a_0 \leq (n-2) - k \\ 
1 \leq b_0 \leq (n-2) - i \\
1 \leq c_0 \leq (n-2) - j
\end{cases}.
$$
Theorem \ref{kim-map} gives
$$
\begin{cases} 
\beta(a) = (n-a_0-k-1)(n-1) + n -a_0 \\ 
\beta(b) = (n-b_0-i-1)(n-1) + n -b_0 \\
\beta(c) = (n-c_0-j-1)(n-1) + n -c_0
\end{cases}
\text{ and } \quad
\begin{cases}
\beta^{-1}(a) = (a_0-1)(n-1) + n-k-1 \\ 
\beta^{-1}(b) = (b_0-1)(n-1) + n-i-1 \\
\beta^{-1}(c) = (c_0-1)(n-1) + n-j-1.
\end{cases}
$$
Let $d := i+j+k$, $r := a_0-j-1$, $s := b_0-k-1$ and $t := c_0-i-1$. Thus
$$
\begin{cases}
a = kn + j + r + 1 \\ 
b = in + k + s + 1 \\
c = jn + i + t + 1.
\end{cases} 
$$
Note that $c < \gb^{-1}(a)$ gives $j(n-1) + (c_0+j) \leq (a_0-1)(n-1) + (n-k-2)$. Since $c_0+j, n-k-2 \in \{0, \ldots, n-2\}$, $j \leq a_0-1$, and then $r \geq 0$. Likewise, $a < \gb^{-1}(b)$ and $b < \gb^{-1}(c)$ give $s,t \geq 0$. Using $b < \gb(a)$,  it follows that $i(n-1) + (b_0+i) \leq (n-a_0-k-1)(n-1) + (n-a_0-1)$. Since $b_0+i, n-a_0-1 \in \{0, \ldots, n-2\}$,  either $i < n-a_0-k-1$ or $i = n-a_0-k-1$ with $b_0+i \leq n-a_0-1$. The latter case gives $b_0 \leq k$, which contradicts $b_0 = s+k+1$ and $s \geq 0$. Hence  $i < n-a_0-k-1$. Using  $a_0 = r+j+1$, it follows that  $r \leq n-3-d$. In particular, $d \leq n-3$. Analogously,   $s,t \leq n-3-d$.  This proves that $(a,b,c)$ is of the form as in \eqref{abc}. \\

Finally, to compute the cardinality of $G_0(P_1,P_2,P_3)$, note that for each $d \in \{ 0, \dots, n-3 \}$, the number of triples $(r,s,t)$ with $r,s,t \in \{0,\ldots, n-3-d\}$ is $(n-d-2)^3$, and the number of triples $(i,j,k) \in \N^3$ with $i+j+k = d$ is
$
\frac{1}{2}(d+1)(d+2).
$
Therefore, $\#G_0(P_1,P_2,P_3)$ is given by
$$
\sum_{d=0}^{n-3} \frac{1}{2}(d+1)(d+2)(n-(d+2))^3 = \frac{1}{120}(n-2)(n-1)n(n+1)(n^2-n-1)=\frac{1}{30}(g-1)g(2g-1).
$$
\end{proof}

\section{Goppa codes supported on $(P_1,P_2)$ and $(P_1,P_2,P_3)$} \label{sec6}

\indent

In this section, we  show how the characterization of  pure gaps presented in the previous sections can   enable  us  to construct algebraic geometry codes with good parameters, having minimum distance better than the Goppa bound. The codes are constructed from nonsingular curves $\xx$ defined over $\F_q$ given by
\begin{equation}\label{c-geral}
XY^{n} + YZ^{n} + ZX^{n} + XYZ \cdot G(X,Y,Z) = 0.
\end{equation}
Let $P_1 = (1:0:0)$, $P_2 = (0:1:0)$ and $P_3 = (0:0:1)$. Goppa codes supported on $\{P_1,P_2,P_3\}$ will be constructed, as follows.  \footnote{For further  details regarding Goppa codes, see \cite[Chapter 2]{HS}.} 
\\

Let us  begin with Goppa codes supported on $(P_1,P_2)$. For $i,j\geq 1$ and $i+j=d\leq n-1$, it follows from Theorem \ref{p1p2} that the pairs
$$
(\ga_1, \ga_2) = ((i-1)n+1, (j-1)n+i) \ \text{and} \ (\gb_1,\gb_2) = (in-i-j, jn-j)
$$
are pure gaps of $\xx$ at $(P_1,P_2)$. In addition,  for $s=1,2$,  the  integers  $t_s$ with $\gb_s \leq t_s \leq \ga_s$ are such that  $(t_1,t_2) \in G_0(P_1,P_2)$  and $\ell(t_1P_1+t_2P_2)=\frac{1}{2}(d-1)(d-2)+i$.  Let us choose $m$ distinct points  $Q_1,\dots,Q_m \in \xx(\F_q) \setminus \{P_1,P_2\}$ and consider the divisors $D = Q_1+\cdots+Q_m$ and $F_1 = (\ga_1 + \gb_1 - 1)P_1 + (\ga_2 + \gb_2 - 1)P_2$. Let $C_{\gO}(D,F_1)$ be the corresponding Goppa code. With this notation,  the following holds.

\begin{theorem} \label{code2}
Let $\xx$ be the curve in \eqref{c-geral}. If $\frac{n+2}{2} \leq i+j \leq n-1$ and $m \geq 2n^2-4n-2$, then $C_{\gO}(D,F_1)$ is a code with parameters
$$
\begin{cases}
\mathrm{len}( C_{\gO}(D,F_1)) = m \\
\dim( C_{\gO}(D,F_1))= m + \frac{1}{2}n^2 + \left( \frac{3}{2}-2(i+j) \right)n+2j \\
d( C_{\gO}(D,F_1))\ge  \left( 2(i+j)+1 \right)n -n^2  -2i -4j +2.
\end{cases}
$$
\end{theorem}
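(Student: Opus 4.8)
The plan is to read off all three parameters from the general facts recalled in Section~\ref{sec2}, using the special divisors already computed in the paper. Write $d=i+j$ and $G:=F_1=(\ga_1+\gb_1-1)P_1+(\ga_2+\gb_2-1)P_2$; from the explicit values of $\ga_s,\gb_s$ one gets $\deg G=(\ga_1+\ga_2+\gb_1+\gb_2)-2=(2d-2)n-2j-1$. The length claim is immediate: by definition $\mathrm{len}(C_{\gO}(D,G))=\deg D=m$, because $D=Q_1+\cdots+Q_m$ with the $Q_k$ distinct $\F_q$-rational points outside $\supp(G)=\{P_1,P_2\}$. For the dimension I would invoke the identity $\dim C_{\gO}(D,G)=i(G-D)-i(G)$, where $i(A)=\ell(A)-\deg(A)+g-1$, and then establish the two vanishings $i(G)=0$ and $\ell(G-D)=0$.

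The identity $i(G)=0$ is where the hypothesis $i+j\ge (n+2)/2$ enters. It gives $2d\ge n+2$, and since $i\ge1$ forces $j\le d-1$, one obtains
\[
\deg G=(2d-2)n-2j-1\ \ge\ (2d-2)n-2(d-1)-1\ =\ 2d(n-1)-2n+1\ \ge\ (n+2)(n-1)-2n+1\ =\ 2g-1,
\]
so $G$ is nonspecial and $i(G)=0$. For $\ell(G-D)=0$, note that $d\le n-1$ and $j\ge1$ give $\deg G\le 2n^2-4n-3$, so the hypothesis $m\ge 2n^2-4n-2$ yields $\deg(G-D)=\deg G-m<0$ and hence $\ell(G-D)=0$; consequently $i(G-D)=m-\deg G+g-1$. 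Therefore $\dim C_{\gO}(D,G)=m-\deg G+g-1$, and substituting $\deg G=(2(i+j)-2)n-2j-1$ and $g=n(n-1)/2$ and simplifying gives exactly $m+\tfrac12 n^2+\big(\tfrac32-2(i+j)\big)n+2j$.

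For the minimum distance I would apply Theorem~\ref{carvalho-torres} to the two points $P_1,P_2$, taking $(a_1,a_2)=(\ga_1,\ga_2)$ and $(b_1,b_2)=(\gb_1,\gb_2)$: by Theorem~\ref{p1p2} every pair $(c_1,c_2)$ with $\ga_s\le c_s\le\gb_s$ is a pure gap at $(P_1,P_2)$, and $G=\sum_{s=1,2}(a_s+b_s-1)P_s$. Theorem~\ref{carvalho-torres} then gives
\[
\mathrm{d}(C_{\gO}(D,G))\ \ge\ \deg G-(2g-2)+(\gb_1-\ga_1+1)+(\gb_2-\ga_2+1).
\]
Since $\gb_1-\ga_1+1=n-(i+j)$ and $\gb_2-\ga_2+1=n-(i+j)+1$, inserting these together with $\deg G$ and $2g-2=n^2-n-2$ and simplifying produces $\big(2(i+j)+1\big)n-n^2-2i-4j+2$, which is the asserted bound.

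There is no genuine difficulty in this argument: every step reduces to an elementary manipulation of the explicit formulas. The one point worth checking carefully is that each numerical hypothesis is tight for its purpose, namely that $i+j\ge(n+2)/2$ is exactly what makes $\deg G\ge 2g-1$ (so that $i(G)=0$), and that $m\ge 2n^2-4n-2$ is exactly what makes $\deg(G-D)<0$ (so that $i(G-D)$ is given by Riemann--Roch alone); if either were relaxed, additional special-divisor terms would appear and one would need the finer computations of Sections~\ref{sec3}--\ref{sec5}.
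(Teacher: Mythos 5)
Your proposal is correct and follows essentially the same route as the paper: length from $\deg D$, dimension from $\dim C_{\gO}(D,F_1)=i(F_1-D)-i(F_1)$ together with the two vanishings $i(F_1)=0$ (via $\deg F_1\ge 2g-1$) and $\ell(F_1-D)=0$ (via $\deg(F_1-D)<0$), and the minimum distance from Theorem~\ref{carvalho-torres} applied to the box of pure gaps $[\ga_1,\gb_1]\times[\ga_2,\gb_2]$ supplied by Theorem~\ref{p1p2}. Your explicit verification of the inequalities $\deg F_1\ge 2g-1$ and $\deg F_1\le 2n^2-4n-3$ only spells out details the paper leaves implicit.
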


\begin{proof}
The length of $C_{\gO}(D,F_1)$ is $\deg(D) = m$. Since $\frac{n+2}{2} \leq i+j$ implies $\deg(F_1) > 2g-2$, where $g = n(n-1)/2$ is the genus of $\xx$, it follows that  $i(F_1)= \ell(F_1) - \deg(F_1) + g - 1 = 0$. Moreover, $m \geq 2n^2-4n-2$ implies $\deg(F_1-D)<0$, and then $\ell(F_1-D) = 0$. Hence
\begin{eqnarray*}
\dim( C_{\gO}(D,F_1))\hspace{-0.2cm} & = & \hspace{-0.2cm} i(F_1-D) - i(F_1) = i(F_1-D) = \ell(F_1-D) - \deg(F_1-D) +g -1 \\
 & = & \hspace{-0.2cm} \deg(D) - \deg(F_1) +g-1 = m + \frac{1}{2}n^2 + \left( \frac{3}{2}-2(i+j) \right)n+2j.
\end{eqnarray*}
By Theorem \ref{carvalho-torres}, $d( C_{\gO}(D,F_1))\ge \deg(F_1) - (2g-2) + (\gb_1 - \ga_1) + (\gb_2 - \ga_2) + 2$. Therefore,
$
d( C_{\gO}(D,F_1))\ge \left( 2(i+j)+1 \right)n -n^2  -2i -4j +2.
$
\end{proof}

Now let us construct Goppa codes supported on $(P_1,P_2,P_3)$. For $i,j,k \geq 0$ with $i+j+k=d \leq n-3$, it follows from Theorem \ref{cor} that 
$$(n_1,n_2,n_3) = (kn+j+1,in+k+1,jn+i+1)$$ and 
$$(p_1,p_2,p_3) = \Big((k+1)n-k-i-2,(i+1)n-i-j-2,(j+1)n-j-k-2\Big)$$ are pure gaps at $(P_1,P_2,P_3)$. In addition,  for $s=1,2,3$,  the  integers  $t_s$ with $n_s \leq t_s \leq p_s$ are such that  $(t_1,t_2,t_3) \in G_0(P_1,P_2,P_3)$ and $\ell(t_1P_1+t_2P_2+t_3P_3)=\frac{1}{2}(d+1)(d+2)$.
Let us choose m distinct points  $Q_1,\dots,Q_m\in \xx(\F_q) \setminus \{P_1,P_2,P_3\}$ and consider the divisors $D= Q_1+\cdots+Q_m$ and
$
F_2 = \sum_{s=1}^{3} (n_s+p_s-1)P_s.
$
Let $C_{\gO}(D,F_2)$ be the corresponding Goppa code. Note that $\deg (F_2)=(2d+3)n-d-6 $. From this,  the  same proof of Theorem \ref{code2} gives  the following.

\begin{theorem} \label{code3}
Let $\xx$ be the curve in \eqref{c-geral}. If $\frac{(n-2)^2}{2n-1} < i+j+k=d \leq n-3$ and $m \geq 2n^2-4n-2$, $C_{\gO}(D,F_2)$ is a  Goppa code  with parameters
$$
\begin{cases}
\mathrm{len}( C_{\gO}(D,F_2)) = m \\
\dim( C_{\gO}(D,F_2))= m + \frac{1}{2}n^2 -(2d+\frac{7}{2})n +d+5 \\
d( C_{\gO}(D,F_2))\ge (2d+7)n -n^2-4d-10.
\end{cases}
$$
\end{theorem}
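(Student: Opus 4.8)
The plan is to run the argument of Theorem \ref{code2} with the pair of pure gaps there replaced by the two triples $(n_1,n_2,n_3)$ and $(p_1,p_2,p_3)$ introduced above, and with $G=F_2$. The length is immediate from the definition of the code: $\mathrm{len}(C_{\gO}(D,F_2))=\deg(D)=m$. The first thing to record is the auxiliary identity
$$
\deg(F_2)=\sum_{s=1}^{3}(n_s+p_s-1)=(2d+3)n-d-6,
$$
obtained by a short manipulation using $i+j+k=d$; this is the value already stated just before the theorem.

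For the dimension, the point is that both hypotheses are tailored so that $i(F_2)$ and $\ell(F_2-D)$ vanish. Writing $2g-2=n^2-n-2$, one checks that the inequality $\deg(F_2)>2g-2$ rearranges, after clearing the denominator, into $d(2n-1)>(n-2)^2$, that is, precisely the assumed bound $\frac{(n-2)^2}{2n-1}<d$; hence $F_2$ is nonspecial, so $\ell(F_2)=\deg(F_2)+1-g$ and $i(F_2)=0$. Since $d\le n-3$ forces $\deg(F_2)\le 2n^2-4n-3$, the hypothesis $m\ge 2n^2-4n-2$ gives $\deg(F_2-D)=\deg(F_2)-m<0$, and so $\ell(F_2-D)=0$. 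Exactly as in Theorem \ref{code2}, one then obtains
$$
\dim(C_{\gO}(D,F_2))=i(F_2-D)-i(F_2)=\ell(F_2-D)-\deg(F_2-D)+g-1=m-\deg(F_2)+g-1,
$$
and substituting $\deg(F_2)=(2d+3)n-d-6$ together with $g=n(n-1)/2$ yields the stated formula $m+\frac{1}{2}n^2-\left(2d+\frac{7}{2}\right)n+d+5$.

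For the lower bound on the minimum distance I would apply Theorem \ref{carvalho-torres} with $s=3$, the points $P_1,P_2,P_3$, and $a_s=n_s$, $b_s=p_s$. Its hypotheses are met because, by Theorem \ref{cor}, every triple
$$
\big(kn+j+r+1,\; in+k+s+1,\; jn+i+t+1\big),\qquad r,s,t\in\{0,\dots,n-3-d\},
$$
is a pure gap at $(P_1,P_2,P_3)$: here $(n_1,n_2,n_3)$ corresponds to $r=s=t=0$ and $(p_1,p_2,p_3)$ to $r=s=t=n-3-d$, whence $n_s\le p_s$ (using $d\le n-3$) and every $(c_1,c_2,c_3)$ with $n_s\le c_s\le p_s$ is a pure gap. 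Theorem \ref{carvalho-torres} then gives
$$
\mathrm{d}(C_{\gO}(D,F_2))\ge\deg(F_2)-(2g-2)+\sum_{s=1}^{3}(p_s-n_s+1).
$$
A direct check shows $p_s-n_s+1=n-d-2$ for each $s$, and inserting $\deg(F_2)=(2d+3)n-d-6$ and $2g-2=n^2-n-2$ produces $\mathrm{d}(C_{\gO}(D,F_2))\ge(2d+7)n-n^2-4d-10$.

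The only step that calls for genuine verification, rather than routine arithmetic, is that the whole box $\{(c_1,c_2,c_3):n_s\le c_s\le p_s\}$ consists of pure gaps at $(P_1,P_2,P_3)$ --- this is the part of the hypothesis of Theorem \ref{carvalho-torres} that does not reduce to the two corners being pure gaps. It follows at once from the explicit parametrization in Theorem \ref{cor}: with $(i,j,k)$ fixed, the three coordinate intervals $[n_1,p_1]$, $[n_2,p_2]$, $[n_3,p_3]$ are exactly the ranges of $r$, $s$, and $t$, so the box is precisely the image of the cube $\{0,\dots,n-3-d\}^{3}$ under that parametrization. Every remaining step is bookkeeping strictly parallel to the proof of Theorem \ref{code2}.
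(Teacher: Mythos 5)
Your proposal is correct and follows exactly the route the paper intends: the paper's own proof of Theorem \ref{code3} consists of the remark that $\deg(F_2)=(2d+3)n-d-6$ followed by ``the same proof of Theorem \ref{code2},'' and you have simply carried out that computation in full, including the verification (via the parametrization in Theorem \ref{cor}) that the entire box between $(n_1,n_2,n_3)$ and $(p_1,p_2,p_3)$ consists of pure gaps, which is the hypothesis of Theorem \ref{carvalho-torres}. All the arithmetic checks out, so there is nothing to add.
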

%

\begin{remark}
Note that the lower bounds for the minimum distance in Theorems \ref{code2} and \ref{code3} are better than the Goppa bound
$d( C_{\gO}(D,F))\geq \deg(F) - (2g-2)$.
\end{remark}

The previous results can be used  to construct several examples of Goppa codes with good parameters. A few of them are presented bellow.
\begin{example}
Consider the nonsingular curves $\xx : XY^4 + YZ^4 + ZX^4 + XYZ \cdot G(X,Y,Z)  = 0$  over $\F_q$, where $G(X,Y,Z)$ is homogeneous of degree $2$. For $i=2$ and $j=1$ in Theorem \ref{code2},  the following table  presents constructive   $[n,k,d]$-codes with parameters matching current records of  the database in \cite{MinT}.
$$
\begingroup
\setlength{\tabcolsep}{10pt} 
\renewcommand{\arraystretch}{1.5} 
\begin{array}{|c|c|c|c|}
\hline G(X,Y,Z) & q & \# \xx(\F_q) & [n,k,d] \\
\hline \hline X^2+XY-Y^2-YZ & 27 & 59 & [ 57, 49, d \geq 6 ] \\
\hline X^2 + Y^2 & 16 & 39 & [ 37, 29, d \geq 6 ] \\
\hline XY + Y^2 + XZ + YZ & 128 & 199 & [ 197, 189, d \geq 6 ] \\
\hline 2X^2 + XY + Y^2 + XZ + 2Z^2 & 81 & 145 & [ 143, 135, d \geq 6 ] \\
\hline 2X^2 + 2Y^2 + 3XZ + 6YZ + 2Z^2 & 49 & 100 & [ 98, 90, d \geq 6 ] \\
\hline
\end{array}
\endgroup
$$
\end{example}

The results can also attain  new records for constructive  codes  in  \cite{MinT}, such as the following.

\begin{example}
The nonsingular $\xx$ curve over $\F_{49}$ defined by
$$XY^5+ YZ^5 + ZX^5  + XYZ(5Y^3 + 4Y^2X + 4YX^2 + 6X^3 + 5Y^2Z + 3X^2Z + 3XZ^2 + 2Z^3)=0$$
has $115$   $\F_{49}$-rational points. From Theorem \ref{code2},  this curve gives rise to new constructive codes  over $\F_{49}$ of parameters $[ 113, 95,d], [ 112, 94,d], [ 111, 93,d],[ 110, 92,d],[ 109, 91,d],[ 108, 90,d],$ and $[ 107, 89,d]$, with $d \geq 12$. According to  \cite{MinT}, the previous record for the minimum distance of such codes was $d=11$.
\end{example}

Additional  codes can be constructed from general  curves \eqref{c-geral} with many rational points.  The following   illustrate some general cases.

\begin{example}
Consider the Hurwitz curve $\xx_q$ with homogeneous equation $XY^{q+1} + YZ^{q+1} + ZX^{q+1} = 0$ defined over $\F_{q^3}$. The number of $\F_{q^3}$-rational points of $\xx_q$ is equal to
$
2q^3 + 1 + (1 - \varepsilon_q)(q^2+q+1),
$
where $\varepsilon_q\in \{0,1,2\}$ is such that $q+1\equiv \varepsilon_q  \bmod 3$ (see \cite[Theorem 3.6]{Pel}). For $(q+3)/2 \leq i+j \leq q$,  Theorem \ref{code2} yields a code  $C_{\gO}(D,F_1)$  over $\F_{q^3}$  with parameters
%
%
$$
\begin{cases}
\mathrm{len}( C_{\gO}(D,F_1)) = 2q^3 - 1 + (1 - \varepsilon_q)(q^2+q+1)\\
\dim( C_{\gO}(D,F_1))= 2q^3 + \frac{1}{2}q^2 - \left( 2(i+j) - \frac{5}{2} \right)q -2i+1 + (1 - \varepsilon_q)(q^2+q+1) \\
d( C_{\gO}(D,F_1))\ge  2\left( i+j -\frac{1}{2} \right)q -q^2 -2j +2.
\end{cases}
$$
\end{example}

\begin{example}
Let $\cC$ be the  curve  $XY^{q} + YZ^{q} + ZX^{q} = 0$ defined over $\F_{q^3}$. By \cite[Proposition 4.6]{CKT}, $\cC$ is $\F_{q^3}$-isomorphic to the Hermitian curve $X^{q+1}+Y^{q+1}+Z^{q+1}=0.$ Therefore, $\cC$ is $\F_{q^{6}}$-maximal, that is, $ \# \cC(\F_{q^6}) =q^{6}+q^{5}-q^{4}+1$.  For $\frac{(q-2)^2}{2q-1} < d \leq q-3$, it follows from  Theorem \ref{code3}  that  $\cC$ gives rise to a code over $\F_{q^{6}}$ with parameters
$$
\begin{cases}
\mathrm{len}( C_{\gO}(D,F_2)) = q^{6}+q^{5}-q^{4}-2\\
\dim( C_{\gO}(D,F_2))= q^{6} + q^{5} - q^{4} + \frac{1}{2}q^2 -(2d+\frac{7}{2})q + d + 3 \\
d( C_{\gO}(D,F_2))\ge (2d+7)q - q^2 -4d -10.
\end{cases}
$$
\end{example}

\bibliographystyle{amsplain}

\vspace{0,5cm}\noindent {\em Authors' addresses}:

\vspace{0.2cm}\noindent Herivelto Borges \\ Instituto de Ciências Matemáticas e de Computação\\  Universidade de São Paulo\\ Avenida Trabalhador São-carlense, 400\\ 13566-590 - São Carlos - SP (Brazil).\\E--mail: {\tt hborges@icmc.usp.br } \\

\vspace{0.2cm}\noindent Gregory  Cunha\\ Instituto de Matemática e Estatística \\  Universidade Federal de Goiás  \\ Campus Samambaia - Rua Jacarandá - Chácaras Califórnia \\ 74001-970 - Goiânia - GO (Brazil).\\ E--mail: {\tt gregoryduran@ufg.br } \\

\vspace{0.2cm}\noindent {\em Acknowledgment}:

\vspace{0.2cm}\noindent The first  author thanks the financial support of  CNPq-Brazil (grants 421440/2016-3  and 311572/2019-7).

\vspace{0.2cm}\noindent The second author thanks the financial support of FAPESP-Brazil (grant 2018/01548-3).

\end{document}